\documentclass[12pt,reqno]{amsart}
\usepackage{amssymb,latexsym,amsmath,amscd, graphicx}
\usepackage[colorlinks=true,pagebackref,hyperindex]{hyperref}
\usepackage{verbatim}
\usepackage{xypic}
\usepackage{tikz}

\usepackage{amsfonts}
\usepackage{amscd}

\setlength{\parindent}{.4 in} \setlength{\textwidth}{6.3 in}
\setlength{\topmargin} {0 in} \setlength{\evensidemargin}{0 in}
\setlength{\oddsidemargin}{0 in} \setlength{\footskip}{.3 in}
\setlength{\headheight}{.3 in} \setlength{\textheight}{8.8 in}
\setlength{\parskip}{.20 cm}


\renewcommand{\phi}{\varphi}
\renewcommand{\epsilon}{\varepsilon}
\renewcommand{\theta}{\vartheta}

\def\ZZ{{\mathbf Z}}

\def\RR{{\mathbf R}}

\def\PP{{\mathbf P}}

\def\cO{\mathcal{O}}

\def\frm{\mathfrak{m}}




\newtheorem{lemma}{Lemma}[section]
\newtheorem{theorem}[lemma]{Theorem}
\newtheorem{corollary}[lemma]{Corollary}
\newtheorem{proposition}[lemma]{Proposition}
\newtheorem*{claim}{Claim}

\theoremstyle{definition}
\newtheorem{definition}[lemma]{Definition}
\newtheorem{remark}[lemma]{Remark}

\newtheorem{example}[lemma]{Example}

\theoremstyle{remark}
\newtheorem*{remark*}{Remark}
\newtheorem*{note*}{Note}

\numberwithin{equation}{section}



\newcommand{\cf}{{\itshape cf.} }


\frenchspacing

\begin{document}

\title{A Frobenius variant of Seshadri constants}

  \author[M.~Musta\c{t}\u{a}]{Mircea~Musta\c{t}\u{a}}
\address{Department of Mathematics, University of Michigan,
Ann Arbor, MI 48109, USA}

\email{mmustata@umich.edu}

\author[K.~Schwede]{Karl~Schwede}
\address{Department of Mathematics, The Pennsylvania State University,
University Park, PA 16802, USA}

\email{schwede@math.psu.edu}

\begin{abstract}
We define and study a version of Seshadri constant for ample line bundles in positive characteristic. We prove that lower bounds for this constant imply the global generation or very ampleness of the corresponding adjoint line bundle. As a consequence, we deduce that the
criterion for global generation and very ampleness of adjoint line bundles in terms of usual
Seshadri constants holds also in positive characteristic.
\end{abstract}
\thanks{The first author was partially supported by NSF research grant
no: 1068190 and by a Packard Fellowship.}
\thanks{The second author was partially support by NSF research grant no: 1064485.}

\subjclass[2010]{14C20, 13A35}
\keywords{Seshadri constant, Frobenius}

\maketitle

\markboth{M.~MUSTA\c{T}\u{A} AND K.~SCHWEDE}{A FROBENIUS VARIANT OF SESHADRI CONSTANTS}

\section{Introduction}

Let $L$ be an ample line bundle on an $n$-dimensional projective variety $X$ (defined over an algebraically closed field $k$ of positive characteristic). The Seshadri constant
$\epsilon(L; x)$ of $L$
at a smooth point
$x\in X$ measures the local positivity of $L$ at $x$. Introduced by Demailly twenty years ago in
\cite{Demailly}, it has generated a lot of interest: see, for example \cite{positivity} and \cite{B+}
for some of the results and open problems involving this invariant.

If $\pi\colon {\rm Bl}_x(X)\to X$ is the blow-up of $X$ at $x$, with exceptional divisor $E$, then
the Seshadri constant $\epsilon(L; x)$ is defined as
$$\epsilon(L,x)=\sup\{t>0\mid \pi^*(L)(-tE)\,\text{is nef}\}.$$
We will be especially interested in an equivalent description in terms of separation of jets.
Recall that one says that a power $L^m$ separates $\ell$-jets at $x$ if the restriction map
$$H^0(X, L^m)\to H^0(X,L^m\otimes\cO_X/\frm_x^{\ell+1})$$
is surjective, where $\frm_x$ is the ideal defining $x$. If $s(L^m; x)$ is the largest $\ell$
such that $L^m$ separates $\ell$-jets at $x$, then
$$\epsilon(L; x)=\lim_{m\to\infty}\frac{s(L^m; x)}{m}=\sup_{m\geq 1}\frac{s(L^m; x)}{m}.$$

Part of the interest in Seshadri constants comes from its connection to statements about the base-point freeness or very ampleness of adjoint line bundles. If the ground field has characteristic zero and $X$ is smooth, then it is an easy consequence of the Kawamata-Viehweg vanishing theorem that
if $\epsilon(L; x)>n$, then $\omega_X\otimes L$ is globally generated at $x$,
and if $\epsilon(L; x)>2n$ for every $x\in X$, then $\omega_X\otimes L$
is very ample. One of the main results in our paper says that the same implications
hold also in positive characteristic.

We prove this by studying another version of Seshadri constant in positive characteristic,
which is designed to take advantage of the Frobenius morphism. Suppose that the ground field has characteristic $p>0$. For a positive integer $e$ and a smooth point $x\in X$, we say that a power
$L^m$ separates $p^e$-Frobenius jets at $x$ if the restriction map
$$H^0(X,L^m)\to H^0(X,L^m\otimes\cO_X/\frm_x^{[p^e]})$$
is surjective, where $\frm_x^{[p^e]}$ is the ideal locally generated by the $p^e$-powers of the generators of $\frm_x$. We denote by $s_F(L^m; x)$ the largest $e$ such that
$L^m$ separates $p^e$-Frobenius jets at $x$. With this notation, the Frobenius-Seshadri
constant is defined as
$$\epsilon_F(L; x):=\sup_{m\geq 1}\frac{p^{s_F(L^m; x)}-1}{m}=\limsup_{m\to\infty}
\frac{p^{s_F(L^m; x)}-1}{m}.$$
The following inequalities between the two versions
of Seshadri constants follow easily from definition
$$\frac{\epsilon(L; x)}{n}\leq \epsilon_F(L; x)\leq\epsilon(L; x),$$
and we give examples when either of the two bounds is achieved.
One can further compare the two invariants in the case of torus-invariant points on
smooth toric varieties, when both of them can be described in terms of the polytope $P$ attached to the line bundle (and suitably normalized). In this case the usual Seshadri constant is obtained by comparing $P$ with the simplex $\{u=(u_1,\ldots,u_n)\in\RR_{\geq 0}^n\mid u_1+\ldots+u_n\leq 1\}$, while the Frobenius
Seshadri constant is obtained by comparing $P$ with the cube $[0,1]^n$
(see Theorem~\ref{toric} below for the precise statement).

The Frobenius-Seshadri constant satisfies some of the basic properties of the usual Seshadri constant, though in some cases the proofs are a bit more subtle. For example, we show that
$\epsilon_F(L; x)$ only depends on the numerical equivalence class of $L$
and $\epsilon_F(L^m;x)=m\cdot\epsilon_F(L,x)$ for every $m\geq 1$. The following theorem is our main result (see Theorem~\ref{adjoint} below for these statements and other
related ones).

\noindent{\bf Theorem}. Let $L$ be an ample line bundle on the smooth projective
variety $X$, defined over a field of positive characteristic.
\begin{enumerate}
\item[i)] If $\epsilon_F(L; x)>1$, then $\omega_X\otimes L$ is globally generated at $x$.
\item[ii)] If $\epsilon_F(L; x)>2$ for every $x\in X$, then $\omega_X\otimes L$ is very ample.
\end{enumerate}
Note that since $\epsilon_F(L; x)\geq\frac{\epsilon(L; x)}{n}$, the above theorem implies the
positive characteristic version of the facts we mentioned above. Namely, if $\epsilon(L; x)>n$, then $\omega_X\otimes L$
is globally generated at $x$, and if $\epsilon(L; x)>2n$ at every $x\in X$, then
$\omega_X\otimes L$ is very ample.

In light of the above theorem, it would be interesting to obtain
lower bounds for either of the two versions of Seshadri constants at very general points
of $X$. Recall that over an uncountable field of characteristic zero, it is expected that
$\epsilon(L; x)\geq 1$ if $x\in X$ is a very general point. This is known if $X$ is a surface
\cite{EL}. When $n\geq 3$, it is shown in \cite{EKL} that $\epsilon(L; x)\geq\frac{1}{n}$
when $x$ is very general in $X$. However, the proofs of both these results make essential use
of the characteristic zero assumption.

The paper is structured as follows. In the next section we introduce the Frobenius-Seshadri
constant and prove some basic properties. In fact, we define a slightly more general version,
in which we consider a finite set of smooth points. In the third section we prove our main result
relating lower bounds on Seshadri constants to global generation and very ampleness properties
of the corresponding adjoint line bundles. In the last section we describe the Frobenius-Seshadri
constant at a torus-fixed point on a smooth toric variety.

\subsection*{Acknowledgment}
This project originated in discussions held during the AIM workshop ``Relating test ideals and multiplier ideals". We are indebted to AIM for organizing this event.
We would also like to thank Bhargav Bhatt and Rob Lazarsfeld for discussions related to this work.

\section{Definition and basic properties}

We begin by recalling the definition and some basic properties of the Seshadri constant
of an ample line bundle.
For details and a nice introduction to this topic, we refer to
\cite[\S 5]{positivity}.

Let $X$ be an $n$-dimensional projective variety (assumed irreducible and reduced)
over an algebraically closed field $k$.
Consider an ample line bundle $L$ on $X$ and $x\in X$ a (closed) point.
If $\pi\colon Y\to X$ is the blow-up of $X$ at $x$, with exceptional divisor $E$,
then the \emph{Seshadri constant} of $L$ at $x$ is
\begin{equation}\label{def_Seshadri}
\epsilon(L; x):=\sup\{\alpha\in\RR_{\geq 0}\mid \pi^*(L)(-\alpha E)\,\text{is nef}\}.
\end{equation}
It is easy to see that since $L$ is ample, we have
$\epsilon(L; x)>0$. Furthermore, one has
$$\epsilon(L; x)=\inf_{Y\ni x}\frac{(L\vert_Y^{\dim(Y)})}{{\rm mult}_x(Y)},$$
where the infimum is over all positive-dimensional irreducible
closed subsets $Y$ of $X$ containing $x$.

We briefly recall some basic properties of Seshadri constants for the purpose of comparison and ease of reference.  All this material can be found in \cite[\S 5]{positivity}.
\begin{proposition}
\label{prop.BasicPropertiesOfSeshadri}
Suppose that  $L$ is an ample line bundle on the projective variety $X$, and let $x\in X$.
\begin{enumerate}
\item $\epsilon(L^m; x) = m \cdot \epsilon(L; x)$.  \cite[Example 5.1.4]{positivity}, \cf Proposition \ref{prop2}.
\item $\epsilon(L; x)$ depends only on the numerical equivalence class of $L$.  \cite[Example 5.1.3]{positivity}, \cf Proposition \ref{numerical}.
\end{enumerate}
\end{proposition}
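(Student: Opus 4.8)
The plan is to deduce both statements directly from the blow-up description \eqref{def_Seshadri}, using that nefness of an $\RR$-Cartier divisor on $Y=\Bl_x(X)$ is both homogeneous (unchanged under multiplication by a positive real number) and numerical (depends only on the class of the divisor modulo numerical equivalence). Throughout, $\pi\colon Y\to X$ denotes the blow-up of $X$ at $x$, with exceptional divisor $E$, and I identify a line bundle with the associated divisor class.

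For part (1), fix an integer $m\geq 1$ and a real number $\alpha\geq 0$. Since
\[
\pi^*(L^m)(-\alpha E)\;=\;m\cdot\bigl(\pi^*(L)(-\tfrac{\alpha}{m}E)\bigr)
\]
and an $\RR$-Cartier divisor $D$ is nef if and only if $mD$ is nef, the number $\alpha$ lies in the set defining $\epsilon(L^m;x)$ in \eqref{def_Seshadri} precisely when $\alpha/m$ lies in the set defining $\epsilon(L;x)$. Taking suprema gives $\epsilon(L^m;x)=m\cdot\epsilon(L;x)$.

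For part (2), assume that $L$ and $L'$ are ample line bundles on $X$ that are numerically equivalent, and set $M=L\otimes(L')^{-1}$, so that $M$ has degree $0$ on every irreducible curve in $X$. The first step is to check that $\pi^*M$ is numerically trivial on $Y$: for an irreducible curve $C\subseteq Y$, either $\pi$ contracts $C$ to a point, in which case $\pi^*M\cdot C=0$ trivially, or $\pi(C)$ is a curve and the projection formula gives $\pi^*M\cdot C=(M\cdot\pi_*C)=0$. Consequently, for every $\alpha\geq 0$ the divisors $\pi^*(L)(-\alpha E)$ and $\pi^*(L')(-\alpha E)$ differ by the numerically trivial class $\pi^*M$, so one of them is nef exactly when the other is; hence the two sets occurring in \eqref{def_Seshadri} coincide, and $\epsilon(L;x)=\epsilon(L';x)$.

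I expect no genuine obstacle here: both parts are formal consequences of standard properties of the nef cone, and the only point requiring a short argument is that pullback preserves numerical triviality, where one must treat $\pi$-contracted curves separately. (These facts are classical, see \cite[Examples 5.1.3 and 5.1.4]{positivity}; their Frobenius analogues, which require more care, will be proved later as Propositions \ref{prop2} and \ref{numerical}.)
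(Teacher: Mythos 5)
Your argument is correct, and it is essentially the standard one. Note that the paper itself supplies no proof of this proposition: it simply cites \cite[Examples 5.1.3 and 5.1.4]{positivity}, and the arguments there are precisely the ones you give, namely that nefness of an $\RR$-Cartier class on the blow-up is invariant under positive rescaling (part~(1)) and under translation by a numerically trivial class (part~(2)), with the pullback of a numerically trivial class remaining numerically trivial checked via the projection formula, splitting into the cases where $\pi$ contracts the curve or maps it finitely onto a curve. No gaps; this matches the cited source.
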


\begin{proposition}\label{prop.BasicPropertiesOfSeshadri_part2}
Suppose, in addition, that $X$ is smooth and the ground field $k$ has characteristic zero.
\begin{enumerate}
\item If $\epsilon(L; x)>\dim(X)$, then the line bundle $\omega_X\otimes L$ is globally generated at $x$. \cite[Proposition 5.1.19(i)]{positivity}, \cf Corollary \ref{cor.adjoint}${\rm (i)}$.
\item If $\epsilon(L; x) > 2 \dim X$ then the rational map defined by $\omega_X\otimes L$ is birational onto its image. \cite[Proposition 5.1.19(ii)]{positivity},
 \cf Corollary \ref{cor.adjoint}${\rm (ii)}$.
\item If $\epsilon(L; x) > 2 \dim X$ for every point $x \in X$, then $\omega_X\otimes L$ is very ample.  \cite[Proposition 5.1.19(iii)]{positivity}, \cf Corollary \ref{cor.adjoint}${\rm (iii)}$.
\end{enumerate}
\end{proposition}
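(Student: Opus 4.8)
The plan is to derive all three statements from Kawamata--Viehweg vanishing on a blow-up of $X$ at one or two points --- the classical route (see \cite[\S 5.1]{positivity}), and the only place where the characteristic-zero hypothesis is genuinely used. Write $n=\dim X$. For a smooth point $x\in X$, let $\pi\colon Y\to X$ be the blow-up at $x$, with exceptional divisor $E\cong\PP^{n-1}$, so that $K_Y=\pi^*K_X+(n-1)E$ and $\cO_Y(E)|_E\cong\cO_{\PP^{n-1}}(-1)$. I would begin by recording the two standard facts about blowing up a smooth point that are the only inputs specific to smooth points: $\pi_*\cO_Y(-aE)=\frm_x^a$ for every $a\geq0$, and $R^i\pi_*\cO_Y(-aE)=0$ for all $i>0$.

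For (i), I would reduce the global generation of $\omega_X\otimes L$ at $x$ to the vanishing $H^1(X,\frm_x\otimes\omega_X\otimes L)=0$ by means of the ideal sheaf sequence of the reduced point $x$. Using the projection formula and the Leray spectral sequence (and the vanishing of the $R^i\pi_*$ just recorded), this group equals $H^1\bigl(Y,\cO_Y(K_Y+\pi^*L-nE)\bigr)$. Now choose a rational number $\alpha$ with $n<\alpha<\min\{n+1,\epsilon(L;x)\}$, which is possible exactly because $\epsilon(L;x)>n$; then $\lceil\pi^*L-\alpha E\rceil=\pi^*L-nE$. The $\QQ$-divisor $N:=\pi^*L-\alpha E$ is nef since $\alpha<\epsilon(L;x)$, is big since it is nef with positive top self-intersection $N^n=(L^n)-\alpha^n>0$ (the inequality $\alpha^n<(L^n)$ being the standard bound $\epsilon(L;x)\leq(L^n)^{1/n}$), and has fractional part supported on the smooth divisor $E$. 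Kawamata--Viehweg vanishing then gives $H^1\bigl(Y,\cO_Y(K_Y+\lceil N\rceil)\bigr)=0$, as required.

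For (iii), the same machine first yields separation of $2$-jets at $x$: replacing $\frm_x$ by $\frm_x^2=\pi_*\cO_Y(-2E)$ reduces the surjectivity of $H^0(\omega_X\otimes L)\to\omega_X\otimes L\otimes\cO_X/\frm_x^2$ to $H^1\bigl(Y,\cO_Y(K_Y+\pi^*L-(n+1)E)\bigr)=0$, which follows by running the previous argument with a rational $\alpha\in\bigl(n+1,\min\{n+2,\epsilon(L;x)\}\bigr)$ (available since $\epsilon(L;x)>2n\geq n+1$); in particular $\omega_X\otimes L$ separates tangent vectors at every point. For the separation of two distinct points $x\neq y$, I would blow up both of them, obtaining disjoint exceptional divisors $E_x$, $E_y$; the key point is that for an irreducible curve $C$ through $x$ and $y$ one has $(L\cdot C)\geq\epsilon(L;x)\cdot\operatorname{mult}_xC$ and $(L\cdot C)\geq\epsilon(L;y)\cdot\operatorname{mult}_yC$, so that $\epsilon(L;x),\epsilon(L;y)>2n$ forces $(L\cdot C)>n\bigl(\operatorname{mult}_xC+\operatorname{mult}_yC\bigr)$, and, with the easier estimate for curves meeting only one of the two points, this makes $\pi^*L-\alpha(E_x+E_y)$ nef --- and big, by the analogous top self-intersection computation --- for a suitable rational $\alpha$ slightly larger than $n$. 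Pushing $\cO_Y\bigl(K_Y+\pi^*L-n(E_x+E_y)\bigr)$ forward and applying Kawamata--Viehweg gives $H^1(X,\frm_x\frm_y\otimes\omega_X\otimes L)=0$, hence the separation of $x$ and $y$. Separating $2$-jets at every point and separating every pair of points gives very ampleness.

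For (ii), I would combine these computations with the behaviour of the Seshadri function: by its lower semicontinuity (\cite[\S 5.1]{positivity}), the hypothesis $\epsilon(L;x)>2n$ at one point forces $\epsilon(L;y)>2n$ for every very general $y\in X$, whence $\omega_X\otimes L$ separates two very general points and is immersive at a very general point; therefore the rational map it defines is generically injective and generically unramified, hence birational onto its image. The only non-formal ingredient anywhere here is Kawamata--Viehweg vanishing --- everything else is bookkeeping with blow-ups --- and it is precisely the replacement of that vanishing by a Frobenius-splitting argument which the present paper carries out in positive characteristic, through the Frobenius-Seshadri constant.
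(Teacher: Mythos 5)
The paper does not give a proof of this proposition: it is stated purely as a recollection for comparison, with each item attributed to \cite[Proposition~5.1.19]{positivity}. Your argument is a correct reconstruction of the classical Kawamata--Viehweg route underlying that reference (reduce to $H^1$-vanishing of $\frm_x$, $\frm_x^2$, or $\frm_x\frm_y$ twists on a blow-up, verify nefness and bigness of $\pi^*L-\alpha E$ from the Seshadri bound, and invoke lower semicontinuity of the Seshadri function in characteristic zero for part (2)), i.e.\ precisely the characteristic-zero argument that the present paper replaces by the Frobenius trace map in Theorem~\ref{adjoint}.
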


The Seshadri constant can be alternatively described in terms of jet separation, as follows. One says that a line bundle $A$ on $X$ separates $\ell$ jets at $x\in X$ if the restriction map
\begin{equation}\label{separation_of_jets}
H^0(X, A)\to H^0(X,A\otimes\cO_X/\frm_x^{\ell+1})
\end{equation}
is surjective, where $\frm_x$ is the ideal defining $x$. Let $s(A; x)$ be the largest $\ell\geq 0$
such that $A$ separates $\ell$ jets at $x$ (with the convention $s(A; x)=0$ if there is no such
$\ell$). It is proved in
\cite[Theorem~5.1.17]{positivity} \cf \cite{Demailly}, that if $x\in X$ is a smooth point, then
\begin{equation}\label{formula_jet_separation}
\epsilon(L; x)=\lim_{m\to\infty}\frac{s(L^m; x)}{m}=\sup_{m\geq 1}\frac{s(L^m; x)}{m}
\end{equation}
(note that the proof therein is characteristic-free).

\bigskip

We turn to the definition of the Frobenius version of the Seshadri constant.
From now on we assume that the ground field has characteristic $p>0$.
We denote by $F\colon X\to X$ the absolute Frobenius morphism, given by the identity on points, and which maps a section
$f$ of $\cO_X$ to $f^p$. This is a finite morphism since $k$ is perfect, and it is flat
on the smooth locus of
$X$ by \cite{KunzCharacterizationsOfRegularLocalRings}. If $J$ is an ideal on $X$ (always assumed to be coherent), we denote by $J^{[p^e]}$ the inverse image
of $J$ by $F^{e}$: if $J$ is locally generated by $(h_i)_{i\in I}$, then $J^{[p^e]}$
is locally generated by $(h_i^{p^e})_{i\in I}$.

The definition of the Frobenius-Seshadri constant is
 modeled
on the above interpretation of the usual Seshadri constant in terms of separation of jets. The main difference is that we replace
the usual powers of the ideal $\frm_x$ defining $x$ by the Frobenius powers.
While we are mostly interested in the Frobenius-Seshadri constant at one point, we will
need the notion in the case of several points in the next section, hence we give the definition
in this slightly more general setting.

Suppose that $Z$ is a finite set of \emph{smooth} closed points on $X$ (with the reduced scheme structure),
defined by the ideal $I_Z$. Given a positive integer $e$, we say that a line bundle $A$ on $X$ separates
$p^e$-Frobenius jets at $Z$ if the restriction map
\begin{equation}\label{restriction_Frobenius}
H^0(X,A)\to H^0(X,A\otimes\cO_X/I_Z^{[p^e]})
\end{equation}
is surjective.

\begin{remark}
If $A$ separates $p^e$-Frobenius jets at $Z$ and $B$ is globally generated, then
$A\otimes B$ also separates $p^e$-Frobenius jets at $Z$. Indeed, there is a section
$t\in H^0(X,B)$ that does not vanish at any point in $Z$, and we have a commutative diagram
\[\label{diag1}
\begin{CD}
H^0(X,A) @>{-\otimes t}>> H^0(X,A\otimes B) \\
@VVV@VVV \\
H^0(X,A\otimes\cO_X/I_Z^{[p^e]})@>>>  H^0(X,A\otimes B\otimes\cO_X/I_Z^{[p^e]})
\end{CD}
\]
in which the bottom horizontal map is an isomorphism by the assumption on $t$, and the left vertical map is surjective by the hypothesis on $A$. Therefore the right vertical map
is surjective.
\end{remark}

Let $s_F(L^m; Z)$ be the largest $e\geq 1$ such that $L^m$ separates
$p^e$-Frobenius jets at $Z$ (if there is no such $e$, then we put
$s_F(L^m ;Z)=0$).  Now we come to the main definition of this paper.

\begin{definition}[Frobenius-Seshadri Constant]
The \emph{Frobenius-Seshadri} constant of $L$ at $Z$ is
$$\epsilon_F(L; Z):=\sup_{m\geq 1} \frac{p^{s_F(L^m; Z)}-1}{m}.$$
\end{definition}

The following lemma gives the analogue in our setting for the inequality
$s(L^{mr}; x)\geq r\cdot s(L^m; x)$ for all positive integers $m$ and $r$.

\begin{lemma}\label{lem1}
If $e=s_F(L^m; Z)$ and $d_r=\frac{p^{re}-1}{p^e-1}$ for some positive integer $r$, then
$$s_F\left(L^{md_r}; Z \right)\geq re.$$
\end{lemma}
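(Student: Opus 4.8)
The plan is to argue by induction on $r$, the inductive step being a \emph{composition} statement: if $L^{m d_{r-1}}$ separates $p^{(r-1)e}$-Frobenius jets at $Z$ and $L^m$ separates $p^e$-Frobenius jets at $Z$, then $L^{m d_r}$ separates $p^{re}$-Frobenius jets at $Z$. This suffices, since $d_1=1$, so the base case $r=1$ is exactly the hypothesis $e=s_F(L^m;Z)$, and since $d_r=p^e d_{r-1}+1$ (an immediate computation with the geometric series), so that $L^{m d_r}\cong L^{m d_{r-1}p^e}\otimes L^m$. The point of writing $d_r$ this way is that the $e$-th iterate $F^e$ of the absolute Frobenius multiplies degrees by $p^e$ and, in a local trivialization, takes a section to its $p^e$-th power; thus a section $\sigma$ of $L^{m d_{r-1}}$ gives a section $\sigma^{p^e}:=(F^e)^*\sigma$ of $L^{m d_{r-1}p^e}$, and products of such pullbacks with sections of $L^m$ are sections of $L^{m d_r}$. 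So the strategy for the composition step is to manufacture, out of sections of $L^{m d_{r-1}}$ and of $L^m$ with prescribed low-order behaviour at the points of $Z$, products in $H^0(X,L^{m d_r})$ realizing an arbitrary $p^{re}$-Frobenius jet.

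Concretely: since $Z$ is a finite set of points, $\cO_X/I_Z^{[p^N]}\cong\bigoplus_{z\in Z}\cO_X/\frm_z^{[p^N]}$ for every $N$, and at a smooth point $z$, after choosing local coordinates $x_1,\dots,x_n$ and trivializing the relevant line bundles, $\cO_X/\frm_z^{[p^N]}$ has $k$-basis $\{x^a : 0\le a_i<p^N\}$. Using that $L^{m d_{r-1}}$ separates $p^{(r-1)e}$-Frobenius jets at $Z$ (so that $H^0$ surjects onto the above direct sum), I would choose, for each $z\in Z$ and each $d$ with $0\le d_i<p^{(r-1)e}$, a section $\sigma_{z,d}\in H^0(X,L^{m d_{r-1}})$ with image $x^d$ in $\cO_X/\frm_z^{[p^{(r-1)e}]}$ and image $0$ in $\cO_X/\frm_{z'}^{[p^{(r-1)e}]}$ for every $z'\neq z$; and using that $L^m$ separates $p^e$-Frobenius jets at $Z$, a section $\tau_{z,c}\in H^0(X,L^m)$ with image $x^c$ in $\cO_X/\frm_z^{[p^e]}$, for each $c$ with $0\le c_i<p^e$. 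I would then show that the $p^{ren}$ sections $\sigma_{z,d}^{p^e}\cdot\tau_{z,c}\in H^0(X,L^{m d_r})$, as $z,d,c$ vary, already span $H^0\!\big(X,L^{m d_r}\otimes\cO_X/I_Z^{[p^{re}]}\big)$, which gives the required surjectivity.

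Two elementary facts about Frobenius powers feed the computation: $(\frm_z^{[p^a]})^{[p^b]}=\frm_z^{[p^{a+b}]}$, and, in characteristic $p$, $f\in\frm_z^{[p^a]}$ implies $f^{p^b}\in\frm_z^{[p^{a+b}]}$. Writing $\sigma_{z,d}=x^d+f$ with $f\in\frm_z^{[p^{(r-1)e}]}$ locally at $z$, we get $\sigma_{z,d}^{p^e}=x^{p^e d}+f^{p^e}\equiv x^{p^e d}\pmod{\frm_z^{[p^{re}]}}$, while $\sigma_{z,d}^{p^e}\in\frm_{z'}^{[p^{re}]}$ at the other points $z'$; hence each product $\sigma_{z,d}^{p^e}\tau_{z,c}$ vanishes modulo $\frm_{z'}^{[p^{re}]}$ for $z'\neq z$, which decouples the points of $Z$ and reduces everything to the span at a single $z$. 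There, writing $\tau_{z,c}\equiv x^c+g\pmod{\frm_z^{[p^{re}]}}$ with $g$ a combination of monomials each having some exponent $\geq p^e$, one finds
\[
\sigma_{z,d}^{p^e}\,\tau_{z,c}\ \equiv\ x^{p^e d+c}\ +\ x^{p^e d}\,g \pmod{\frm_z^{[p^{re}]}}.
\]
Decomposing exponents in base $p^e$ as $a=p^e\delta+\gamma$ with $0\le\gamma_i<p^e$ gives a bijection $(d,c)\mapsto p^e d+c$ between the index set of the products and the monomial basis of $\cO_X/\frm_z^{[p^{re}]}$, and the correction term $x^{p^e d}g$ only involves monomials whose ``high digit'' $\delta$ dominates $d$ coordinatewise and differs from it (since $g$ contributes an exponent $\geq p^e$ in some coordinate), hence has strictly larger coordinate sum. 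Ordering products and basis monomials by the coordinate sum of $d$ (resp.\ $\delta$), the transition matrix is block-triangular with identity blocks on the diagonal, so the products span; combined with the vanishing at the other points, this yields the surjectivity onto $H^0(X,L^{m d_r}\otimes\cO_X/I_Z^{[p^{re}]})$ and closes the induction. The step I expect to need the most care is precisely this bookkeeping at a point — verifying that multiplication by an element of $\frm_z^{[p^e]}$ can only push the base-$p^e$ leading digit strictly up, so that the leading terms $x^{p^e d+c}$ survive and form a basis — together with checking that the ``concentrated'' lifts $\sigma_{z,d}$ are genuinely available from the inductive hypothesis (surjectivity onto the full direct sum, not just pointwise); the Frobenius-power identities and the degree arithmetic are routine.
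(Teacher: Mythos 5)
Your proof is correct and is essentially the same as the paper's: the same induction on $r$ via the identity $d_r=p^e d_{r-1}+1$, the same use of Frobenius powers of sections of $L^{md_{r-1}}$ multiplied by sections of $L^m$, and the same base-$p^e$ digit decomposition of exponents (your block-triangular transition matrix ordered by coordinate sum of the high digit is the paper's descending induction on $S=\sum a_i'$ in slightly different clothes). The only cosmetic difference is that the paper requires both factors $t_1\in\Gamma_x^1$ and $t_2\in\Gamma_x^{r-1}$ to vanish at $Z\smallsetminus\{x\}$, whereas you impose this only on $\sigma_{z,d}$ and let the $p^e$-th power absorb the localization, which works just as well.
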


\begin{proof}
If $e=0$, then the assertion is clear, hence we may assume that $e\geq 1$.
It is enough to show that for every $x\in Z$, if $Z_x=Z\smallsetminus \{x\}$ and
$\Gamma^r_x:=H^0(X,I_{Z_x}^{[p^{re}]}\otimes L^{md_r})$,
then the map induced by restriction
\begin{equation}\label{eq1_lem1}
\phi_r\colon \Gamma^r_x\to H^0(X,L^{md_r}\otimes\cO_X/\frm_x^{[p^{re}]})
\end{equation}
is surjective, where $\frm_x$ is the ideal defining $x$.
We prove this by induction of $r$. Note that the case $r=1$ follows from hypothesis.
Suppose now that $r\geq 2$.

By assumption, $x$ is a smooth point of $X$.
Let us choose local algebraic coordinates $y_1,\ldots,y_n$ at $x$ (that is,
a regular system of parameters of $\cO_{X,x}$). After choosing an isomorphism
$L_x\simeq\cO_{X,x}$, we identify
 $L^{md_r}\otimes
\cO_X/\frm_x^{[p^{re}]}$ with
$\cO_X/\frm_x^{[p^{re}]}$, which
has a $k$-basis given by $y_1^{a_1}\cdots y_n^{a_n}$, with $0\leq a_i\leq p^{re}-1$
for all $i$. Let us write
$$a_i=a_{i,0}+a_{i,1}p^e+\ldots+a_{i,r-1}p^{e(r-1)}=a_{i,0}+p^ea'_i,$$ with
$0 \leq a_{i,j}\leq p^e-1$ for all $i$ and $j$.  Observe that $0 \leq a_i' \leq p^{(r-1)e} - 1$ for all $i$.

We will prove that $y_1^{a_1}\cdots y_n^{a_n}$ lies in the image of $\phi_r$ by descending induction on $S:=\sum_{i=1}^na_i'$.  This will complete the proof.  The first non-trivial step in the induction starts with $S=n(p^{(r-1)e}-1)$, in which case all $a_i' = p^{(r-1)e}- 1$.

\begin{claim}
Each element in the product $(y_1^{p^e},\ldots,y_n^{p^e})\cdot\prod_{i=1}^ny_i^{p^ea'_i}$is congruent to an element in ${\rm Im}(\phi_r)$ mod $\frm_x^{[p^{re}]}$.
\end{claim}
\begin{proof}[Proof of claim]
To see this, consider
the term
\begin{equation}\label{eq_claim}
y_{\ell}^{p^e} \cdot \prod_{i=1}^ny_i^{p^ea'_i}.
\end{equation}
If $a'_{\ell}=p^{(r-1)e}-1$ (for example, this is the case when
$S=n(p^{(r-1)e}-1)$), then $p^e+p^ea'_{\ell}=p^{re}$ and the monomial in (\ref{eq_claim})
lies in $\frm_x^{[p^{re}]}$.
On the other hand, if $a'_{\ell}<p^{(r-1)e}-1$, then the monomial in
(\ref{eq_claim}) can be written as
\begin{equation}\label{eq_claim2}
\prod_{i=1}^ny_i^{p^eb'_i}
\end{equation}
with $0 \leq b_i' \leq p^{e(r-1)} - 1$ for every $i$.  We then write $b'_i=\sum_{j=1}^{r-1}b_{i,j}p^{(j-1)e}$ for some $b_{i,j}$
with $0\leq b_{i,j}\leq p^e-1$. Since
$\sum_{i=1}^nb'_i=S+1$, it follows from the inductive hypothesis with respect to $S$
that the monomial in (\ref{eq_claim2}) is congruent to an element in ${\rm Im}(\phi_r)$
mod $\frm_x^{[p^{re}]}$.
This completes the proof of the claim.
\end{proof}
We now return to the proof of the lemma.
Surjectivity of $\phi_{1}$ implies that there is $t_{1}\in \Gamma_x^{1}$ whose image $t_{1,x} \in \cO_{X,x} \simeq L_x$ satisfies
$\prod_{i=1}^ny_i^{a_{i,0}}- t_{1,x} \in (y_1^{p^e},\ldots,y_n^{p^e}) = \mathfrak{m}_x^{[p^e]}$.
Furthermore,
by the inductive assumption with respect to $r$, we can find
$t_2\in\Gamma_x^{r-1}$ such that $\prod_{i=1}^ny_i^{a'_i}$ is congruent to
$\phi_{r-1}(t_2)$ modulo $\frm_x^{[p^{e(r-1)}]}$.  In other words
\[
\prod_{i=1}^ny_i^{a'_i} - t_{2,x} \in \mathfrak{m}_x^{[p^{(r-1)e}]} \subseteq \cO_{X,x}.
\]
In this case $(F^e)^*(t_2) = t_2^{p^{e}}$ lies in
$H^0(I_{Z_x}^{[p^{(r-1)e + e}]}\otimes L^{mp^ed_{r-1}})$.
Observe that $t_1t_2^{p^e}\in \Gamma_x^r \subseteq H^0(X, L^{mdr})$ and also that $\phi_r(t_1t_2^{p^e})$ is simply the residue of $t_{1,x} \cdot t_{2,x}^{p^e} = (t_1 t_{2}^{p^e})_x$ modulo $\mathfrak{m}^{[p^{re}]}$.  We certainly have that
\[
\begin{array}{rcl}
&   & \prod_{i=1}^ny_i^{a_i}-t_{1,x} \cdot t_{2,x}^{p^e} \\
& = & \prod_{i=1}^ny_i^{a_i} -  t_{1,x} \cdot t_{2,x}^{p^e}  + t_{1,x} \cdot \prod_{i=1}^ny_i^{p^e a'_i} - t_{1,x} \cdot \prod_{i=1}^ny_i^{p^e a'_i}\\
& = & \left(\prod_{i=1}^ny_i^{a_{i,0}}- t_{1,x} \right)\cdot  \prod_{i=1}^ny_i^{p^e a'_i} + t_{1,x}\cdot \left( \prod_{i=1}^ny_i^{p^e a'_i} - t_{2,x}^{p^e} \right) \\
& \in & (y_1^{p^e},\ldots,y_n^{p^e})\cdot
\prod_{i=1}^ny_i^{p^ea'_i}+\frm_x^{[p^{re}]}.
\end{array}
\]
The claim then implies that
$\prod_{i=1}^ny_i^{a_i}$ is congruent to an element in
${\rm Im}(\phi_r)$ mod $\frm_x^{[p^{re}]}$. This proves the lemma.
\end{proof}

\begin{proposition}\label{prop1}
Let $L$ be an ample line bundle on the projective variety $X$, and $Z$ a reduced finite set of
smooth points on $X$.
\begin{enumerate}
\item[i)] We have $\epsilon_F(L; Z)=\sup_{m,e}\frac{p^e-1}{m}$, where the supremum is over
all $m,e\geq 1$ such that $L^m$ separates $p^e$-Frobenius jets at $Z$
\item[ii)] Given any $\delta>0$, there is $e_0\geq 1$ such that
for every $e$ divisible by $e_0$, there is $m$ such that $L^m$ separates
$e$-Frobenius jets at $Z$ and $\frac{p^e-1}{m}>\epsilon_F(L; Z)-\delta$.
\item[iii)] We have $\epsilon_F(L; Z)=\limsup_{m\to\infty}\frac{p^{s_F(L^m;Z)}-1}{m}$.
\end{enumerate}
\end{proposition}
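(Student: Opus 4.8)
The plan is to deduce all three parts of Proposition~\ref{prop1} from the definition of $\epsilon_F(L;Z)$ together with the superadditivity statement of Lemma~\ref{lem1}, exactly as one proves the corresponding facts for the usual Seshadri constant from the inequality $s(L^{mr};x)\geq r\cdot s(L^m;x)$. The main point throughout is the elementary numerical identity
\[
\frac{p^{re}-1}{p^e-1}=1+p^e+\cdots+p^{(r-1)e}=:d_r,
\qquad\text{so that}\qquad \frac{p^{re}-1}{md_r}=\frac{p^e-1}{m}.
\]
Thus raising $e$ to $re$ while replacing $m$ by $md_r$ leaves the ratio $\frac{p^{e}-1}{m}$ unchanged, and Lemma~\ref{lem1} guarantees that this is legitimate: if $L^m$ separates $p^e$-Frobenius jets at $Z$, then $L^{md_r}$ separates $p^{re}$-Frobenius jets at $Z$.

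For part~(i), one inequality is trivial: if $e=s_F(L^m;Z)$, then $L^m$ separates $p^e$-Frobenius jets at $Z$, so $\frac{p^{s_F(L^m;Z)}-1}{m}$ is one of the numbers in the supremum on the right, giving $\epsilon_F(L;Z)\leq\sup_{m,e}\frac{p^e-1}{m}$. For the reverse inequality, suppose $L^m$ separates $p^e$-Frobenius jets at $Z$ for some $e\geq 1$; then by definition $s_F(L^m;Z)\geq e$, hence $\frac{p^{s_F(L^m;Z)}-1}{m}\geq\frac{p^e-1}{m}$, and taking suprema gives the claim. This part uses only the definition of $s_F$, not Lemma~\ref{lem1}.

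For part~(ii), fix $\delta>0$. By part~(i) choose $m_0,e_0\geq 1$ with $L^{m_0}$ separating $p^{e_0}$-Frobenius jets at $Z$ and $\frac{p^{e_0}-1}{m_0}>\epsilon_F(L;Z)-\delta$. Now let $e$ be any multiple of $e_0$, say $e=re_0$, and set $m:=m_0 d_r$ with $d_r=\frac{p^{re_0}-1}{p^{e_0}-1}$; then Lemma~\ref{lem1} (applied with the roles $m\mapsto m_0$, $e\mapsto e_0$) shows $s_F(L^{m};Z)\geq re_0=e$, so $L^{m}$ separates $p^{e}$-Frobenius jets at $Z$, and by the identity above $\frac{p^{e}-1}{m}=\frac{p^{e_0}-1}{m_0}>\epsilon_F(L;Z)-\delta$, as desired. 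For part~(iii), since $\limsup_{m\to\infty}\frac{p^{s_F(L^m;Z)}-1}{m}\leq\sup_{m\geq 1}\frac{p^{s_F(L^m;Z)}-1}{m}=\epsilon_F(L;Z)$ is immediate, only the reverse needs argument: given $\delta>0$, apply part~(ii) to produce, for each $r$, an $m_r=m_0 d_r$ with $\frac{p^{s_F(L^{m_r};Z)}-1}{m_r}\geq\frac{p^{e_0}-1}{m_0}>\epsilon_F(L;Z)-\delta$; since $d_r\to\infty$ the integers $m_r$ are unbounded, so this forces $\limsup_{m\to\infty}\frac{p^{s_F(L^m;Z)}-1}{m}\geq\epsilon_F(L;Z)-\delta$, and letting $\delta\to 0$ finishes the proof.

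The only place requiring genuine care—and hence the main obstacle—is bookkeeping in part~(ii): one must be sure that $e_0$ can be chosen \emph{once and for all} (independently of $e$) so that \emph{every} multiple of $e_0$ works, which is exactly what the structure "$m:=m_0 d_r$" delivers, since $m_0$ and $e_0$ are fixed before $r$ is chosen. There is a trivial edge case when $\epsilon_F(L;Z)=0$ (equivalently $s_F(L^m;Z)=0$ for all $m$), where all three statements hold vacuously and one may take $e_0=1$; it is worth mentioning this at the outset so the suprema in parts (i) and (ii) are over a nonempty set in the substantive case.
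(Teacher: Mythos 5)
Your proof is correct and follows essentially the same route as the paper: part (i) from the definition of $s_F$, part (ii) by fixing $m_0, e_0$ from the definition and using Lemma~\ref{lem1} with $m=m_0 d_r$, and part (iii) by noting the sequence $m_r\to\infty$ produced in (ii). One small remark on your edge case: because $L$ is ample and the points of $Z$ are smooth, Serre vanishing gives $s_F(L^m;Z)\geq e$ for any fixed $e$ once $m\gg 0$, so $\epsilon_F(L;Z)>0$ always and the degenerate case never arises; it is not harmful to mention it, but it is vacuous rather than a genuine branch of the argument.
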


\begin{proof}
The assertion in i) follows from definition, since whenever $L$ separates $p^e$-Frobenius jets
at $Z$, we have $s_F(L^m; Z)\geq e$, hence $\frac{p^e-1}{m}\leq\frac{p^{s_F(L^m; Z)}-1}{m}$.
In order to prove ii), note that by definition we may choose $m_0$ such that
$\frac{p^{e_0}-1}{m_0}>\epsilon_F(L; Z)-\delta$, where $e_0=s_F(L^{m_0}; Z)$. If $e=re_0$, then it follows from
Lemma~\ref{lem1} that if $m=m_0\frac{p^{e_0r}-1}{p^{e_0}-1}$, then
$s_F(L^m; Z)\geq e$. Since $\frac{p^e-1}{m}=\frac{p^{e_0}-1}{m_0}$, the assertion in
ii) follows. We deduce from ii) that there is a sequence $(m_{\ell})_{\ell\geq 1}$
with $m_{\ell}\to \infty$ such that $\lim_{\ell\to\infty}\frac{p^{s_F(L^{m_{\ell}}; Z)}-1}{m_{\ell}}=
\epsilon_F(L; Z)$, which implies iii).
\end{proof}

\begin{remark}
We will see below in Example \ref{projective_space} that we cannot replace the $\lim \sup$ in Proposition \ref{prop1} iii) by a limit.  This stands in contrast with the ordinary Seshadri constant.
\end{remark}

\begin{proposition}\label{prop2}
If $L$ is an ample line bundle on the projective variety $X$, and $Z$ is a reduced set
of smooth points on $X$, then $\epsilon_F(L^r; Z)=r\cdot\epsilon_F(L;Z)$ for every positive integer $r$.
\end{proposition}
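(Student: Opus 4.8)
The plan is to prove the two inequalities $\epsilon_F(L^r;Z)\le r\,\epsilon_F(L;Z)$ and $\epsilon_F(L^r;Z)\ge r\,\epsilon_F(L;Z)$ separately, using throughout the description of the Frobenius--Seshadri constant from Proposition~\ref{prop1}~i): for an ample line bundle $A$ one has $\epsilon_F(A;Z)=\sup\{(p^e-1)/m : m,e\ge 1,\ A^m\text{ separates }p^e\text{-Frobenius jets at }Z\}$. I will call a pair $(m,e)$ \emph{admissible for} $A$ when $A^m$ separates $p^e$-Frobenius jets at $Z$.

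For the upper bound, observe that $(m,e)$ is admissible for $L^r$ exactly when $(L^r)^m=L^{rm}$ separates $p^e$-Frobenius jets at $Z$; in that case $(rm,e)$ is admissible for $L$, so $(p^e-1)/(rm)\le\epsilon_F(L;Z)$, i.e.\ $(p^e-1)/m\le r\,\epsilon_F(L;Z)$. Taking the supremum over all pairs admissible for $L^r$ gives $\epsilon_F(L^r;Z)\le r\,\epsilon_F(L;Z)$. The same bookkeeping shows
\[
\epsilon_F(L^r;Z)=r\cdot\sup\Bigl\{\tfrac{p^e-1}{M}\ :\ r\mid M,\ (M,e)\text{ admissible for }L\Bigr\},
\]
so it remains to prove that restricting the supremum defining $\epsilon_F(L;Z)$ to multiplicities $M$ divisible by $r$ does not change its value.

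We may assume $\epsilon_F(L;Z)>0$, since otherwise the upper bound already gives $\epsilon_F(L^r;Z)=0$. Fix $\delta>0$ with $\delta<\epsilon_F(L;Z)$ and choose $m$ with $e:=s_F(L^m;Z)$ satisfying $(p^e-1)/m>\epsilon_F(L;Z)-\delta$; then $e\ge 1$. Two inputs are combined. First, Lemma~\ref{lem1} applied with $r$ replaced by an arbitrary $k\ge 1$ shows that $L^{md_k}$ separates $p^{ke}$-Frobenius jets at $Z$, where $d_k=\frac{p^{ke}-1}{p^e-1}$, and since $md_k=m\cdot d_k$ one computes $\frac{p^{ke}-1}{md_k}=\frac{p^e-1}{m}$; thus the ratio stays fixed while the multiplicity $md_k$ grows without bound. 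Second, since $L$ is ample there is $N_0$ with $L^j$ globally generated for all $j\ge N_0$, and by the remark following \eqref{restriction_Frobenius}, tensoring a line bundle that separates $p^{ke}$-Frobenius jets at $Z$ with such an $L^j$ preserves this property. Now for each $k$ pick $t_k\in\{N_0,\dots,N_0+r-1\}$ with $r\mid(md_k+t_k)$ and set $M_k:=md_k+t_k$. Then $L^{M_k}=L^{md_k}\otimes L^{t_k}$ separates $p^{ke}$-Frobenius jets at $Z$ and $r\mid M_k$, so $\frac{p^{ke}-1}{M_k}$ occurs in the restricted supremum. Since $\frac{p^{ke}-1}{M_k}=\frac{p^e-1}{m}\cdot\frac{md_k}{md_k+t_k}$ with $md_k\to\infty$ and $t_k$ bounded, for $k\gg 0$ we get $\frac{p^{ke}-1}{M_k}>\epsilon_F(L;Z)-2\delta$. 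Hence the restricted supremum is at least $\epsilon_F(L;Z)-2\delta$ for every $\delta>0$, so it equals $\epsilon_F(L;Z)$, and therefore $\epsilon_F(L^r;Z)\ge r\,\epsilon_F(L;Z)$.

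I expect the lower bound to be the only real obstacle: one cannot simply enlarge $m$ to a multiple of $r$, since that would shrink $(p^e-1)/m$ by an uncontrolled factor when $m$ is small. Lemma~\ref{lem1} is the device that circumvents this, by first replacing $m$ with an arbitrarily large multiple $md_k$ at no cost to the ratio, after which a bounded correction $t_k$ (used to achieve divisibility by $r$, at the price of a globally generated twist) is asymptotically negligible. Everything else is routine manipulation of suprema.
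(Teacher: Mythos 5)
Your proof is correct and takes essentially the same route as the paper: both directions are handled identically (the upper bound by observing that admissible pairs for $L^r$ correspond to admissible pairs for $L$ with multiplicity divisible by $r$, and the lower bound by using Lemma~\ref{lem1} to blow up the multiplicity without changing the ratio $\frac{p^e-1}{m}$, then adding a bounded globally generated twist --- your $t_k\in\{N_0,\dots,N_0+r-1\}$ versus the paper's $a_i=\lceil(md_i+j_0)/r\rceil$, which is the same device --- so that the correction becomes negligible in the limit).
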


\begin{proof}
The inequality ``$\leq$" follows from definition since the left side is the supremum over a smaller set.  Hence we prove the opposite inequality.
Let us fix $j_0$ such that $L^j$ is globally generated for every $j\geq j_0$.
In particular, $L^j$ has sections that do not vanish at any of the points in $Z$, hence
$s_F(L^{m+j}; Z)\geq s_F(L^m;Z)$ for all such $j$ and all $m$.

Given $\delta>0$, let us choose $m$ such that if $e=s_F(L^m; Z)$, then
$\frac{p^e-1}{m}>\epsilon_F(L; Z)-\delta$. For every $i\geq 1$, let
$d_i=\frac{p^{ie}-1}{p^e-1}$. Using Lemma~\ref{lem1}, we deduce that for every $j\geq j_0$
we have
$$s_F(L^{md_i+j}; Z)\geq s_F(L^{md_i}; Z)\geq ie.$$
Let $a_i=\lceil (md_i+j_0)/r\rceil$, where $\lceil u\rceil$ denotes the smallest integer $\geq u$. Note that $ra_i \geq m d_i + j_0$ which implies
$$\frac{p^{s_F(L^{ra_i}; Z)}-1}{a_i}\geq \frac{p^{ie}-1}{a_i}=\frac{p^e-1}{m}
\cdot \frac{d_im}{\lceil (md_i+j_0)/r\rceil}.$$
Applying $\limsup_{i \to \infty}$ to both sides yields
\[
\varepsilon_F(L^r, Z) \geq \limsup_{i \to \infty} \frac{p^{s_F(L^{ra_i}; Z)}-1}{a_i} \geq \frac{p^e-1}{m} \cdot r > (\varepsilon_F(L; Z) - \delta) \cdot r.
\]
Since this holds for every
$\delta>0$, we deduce the inequality ``$\geq$" in the proposition.
\end{proof}

\begin{remark}
Based on the analogy with the usual Seshadri constant, it is natural to expect
that if $L_1$ and $L_2$ are ample line bundles on $X$, and $Z$ is a finite set of smooth points on $X$,
then $\epsilon_F(L_1\otimes L_2; Z)\geq\epsilon_F(L_1; Z)+\epsilon_F(L_2; Z)$.
However, we do not know whether this is, indeed, the case.
\end{remark}

\begin{remark}\label{regularity}
Suppose that $L$ is an ample line bundle on the $n$-dimensional projective variety $X$ and $Z$
is a finite set of smooth points on $X$, defined by the ideal $I_Z$.
Let $m_0$ be such that $L^{m_0}$ is globally generated and $H^i(X,L^m)=0$ for all $i\geq 1$ and $m\geq m_0$. If $m\geq m_0$, then
$s_F(L^m;Z)\geq e$ if and only if $H^1(X,I_Z^{[p^e]}\otimes L^m)=0$. Note that in any case
we have $H^i(X,I_Z^{[p^e]}\otimes L^m)=0$ for $i\geq 2$. It follows that
if $m\geq nm_0$ and
$s_F(L^m; Z)\geq e$, then $I_Z^{[p^e]}\otimes L^{m+m_0}$ is $0$-regular with respect to
$L^{m_0}$ in the sense of Castelnuovo-Mumford regularity. In particular, we see that
$I_Z^{[p^e]}\otimes L^{m+m_0}$ is globally generated (we refer to
\cite[\S 1.8]{positivity} for the basic facts about Castelnuovo-Mumford regularity).
\end{remark}

From now on, we will mostly consider Frobenius-Seshadri constants at a single smooth point
$x\in X$,
in which case we simply write $\epsilon_F(L; x)$.

\begin{example}\label{projective_space}
Consider the case when $X=\PP^n$ and $L=\cO(1)$. Note that for every
$x\in\PP^n$, we have
$s_F(\cO(m); x)\geq e$ if and only if $m\geq n(p^e-1)$. This implies that $\epsilon_F(\cO(m); x)=
\frac{1}{n}$. Note that in this case $s(\cO(m); x)= m$, hence $\epsilon(\cO(1); x)=1$.
This example also shows that the $\lim\sup$ in Proposition~\ref{prop1} iii) might not be limit:
indeed, if we consider $m_e=n(p^e-1)-1$, then
$$\frac{p^{s_F(\cO(m_e); x)}-1}{m_e}=\frac{p^{e-1}-1}{n(p^{e}-1)-1}\to\frac{1}{np}\,\,\text{as}\,\,e\to\infty.$$
\end{example}

\begin{proposition}\label{ineq_usual_Seshadri}
If $L$ is an ample line bundle on the $n$-dimensional projective variety $X$, then for
every smooth point $x\in X$ we have
$$\frac{\epsilon(L;x)}{n}\leq \epsilon_F(L; x)\leq\epsilon(L; x).$$
\end{proposition}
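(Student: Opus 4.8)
The plan is to reduce both inequalities to the interpretation of $\epsilon(L;x)$ via separation of ordinary jets, formula~\eqref{formula_jet_separation}, by comparing the Frobenius ideals $\frm_x^{[p^e]}$ with ordinary powers of $\frm_x$ at the smooth point $x$. Fixing a regular system of parameters $y_1,\ldots,y_n$ of $\cO_{X,x}$, so that $\frm_x^{[p^e]}=(y_1^{p^e},\ldots,y_n^{p^e})$ locally near $x$, I would first record the chain of inclusions
$$\frm_x^{n(p^e-1)+1}\subseteq\frm_x^{[p^e]}\subseteq\frm_x^{p^e}.$$
The right inclusion is clear, and the left one is the pigeonhole principle: a monomial $y_1^{a_1}\cdots y_n^{a_n}$ with $\sum_i a_i\ge n(p^e-1)+1$ must have $a_\ell\ge p^e$ for some $\ell$, hence lies in $\frm_x^{[p^e]}$. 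Now for nested ideals $J_1\subseteq J_2$ both cosupported at the single point $x$, the sheaf $\cO_X/J_2$ is a quotient of $\cO_X/J_1$; tensoring by the line bundle $L^m$ and taking global sections preserves this surjection because the sheaves are supported at a point, so surjectivity of $H^0(X,L^m)\to H^0(X,L^m\otimes\cO_X/J_1)$ forces surjectivity of $H^0(X,L^m)\to H^0(X,L^m\otimes\cO_X/J_2)$. Applied to the chain above, this yields the two implications:
\begin{enumerate}
\item[(a)] if $s(L^m;x)\ge n(p^e-1)$, then $L^m$ separates $p^e$-Frobenius jets at $x$;
\item[(b)] if $L^m$ separates $p^e$-Frobenius jets at $x$, then $s(L^m;x)\ge p^e-1$.
\end{enumerate}

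The upper bound is then immediate from (b): for each $m\ge 1$, applying (b) with $e=s_F(L^m;x)$ gives $s(L^m;x)\ge p^{s_F(L^m;x)}-1$, hence $\tfrac{p^{s_F(L^m;x)}-1}{m}\le\tfrac{s(L^m;x)}{m}\le\epsilon(L;x)$ by~\eqref{formula_jet_separation}; taking the supremum over $m$ gives $\epsilon_F(L;x)\le\epsilon(L;x)$.

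For the lower bound I would combine (a) with the description $\epsilon_F(L;x)=\sup_{m,e}\tfrac{p^e-1}{m}$ from Proposition~\ref{prop1}(i) and with the elementary superadditivity $s(L^{mr};x)\ge r\cdot s(L^m;x)$. Given $\delta>0$, choose $m_0$ with $\ell_0:=s(L^{m_0};x)$ satisfying $\tfrac{\ell_0}{m_0}>\epsilon(L;x)-\delta$ (possible by~\eqref{formula_jet_separation}). For $e\ge 1$ set $r_e=\lceil n(p^e-1)/\ell_0\rceil$; then $s(L^{m_0 r_e};x)\ge r_e\ell_0\ge n(p^e-1)$, so (a) shows that $L^{m_0 r_e}$ separates $p^e$-Frobenius jets at $x$, and therefore
$$\epsilon_F(L;x)\ge\frac{p^e-1}{m_0 r_e}\ge\frac{\ell_0}{m_0 n}\cdot\frac{n(p^e-1)}{n(p^e-1)+\ell_0}.$$
Letting $e\to\infty$, the right-hand side tends to $\tfrac{\ell_0}{m_0 n}>\tfrac{\epsilon(L;x)-\delta}{n}$, so $\epsilon_F(L;x)\ge\tfrac{\epsilon(L;x)-\delta}{n}$; since $\delta>0$ is arbitrary, $\epsilon_F(L;x)\ge\tfrac{\epsilon(L;x)}{n}$.

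The only step requiring real care is this last paragraph: because $p^e-1$ runs through a sparse set of integers, one cannot compare $s_F(L^m;x)$ and $s(L^m;x)$ directly at a fixed twist $m$, and so one must pass to the multiples $m_0 r_e$ of $m_0$ and let $e\to\infty$ so that the rounding in $r_e=\lceil n(p^e-1)/\ell_0\rceil$ becomes asymptotically negligible. Everything else is a formal manipulation of the definitions together with the ideal inclusions above.
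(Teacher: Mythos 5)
Your proof is correct and follows essentially the same route as the paper: you establish the inclusion chain $\frm_x^{n(p^e-1)+1}\subseteq\frm_x^{[p^e]}\subseteq\frm_x^{p^e}$, use it to translate between separation of ordinary and Frobenius jets, and then derive the upper bound directly while obtaining the lower bound by choosing $m_0$ with $\tfrac{s(L^{m_0};x)}{m_0}$ close to $\epsilon(L;x)$, passing to the multiples $m_0\lceil n(p^e-1)/s(L^{m_0};x)\rceil$, and letting $e\to\infty$ to kill the rounding. The only stylistic difference is that you spell out the surjection argument for nested ideals cosupported at a point rather than leaving it implicit; the substance coincides with the paper's proof.
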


\begin{proof}
Note that if $\frm_x$ is the ideal defining $x$, then we have
\begin{equation}\label{eq1_ineq_usual_Seshadri}
\frm_x^{n(p^e-1)+1}\subseteq \frm_x^{[p^e]}\subseteq\frm_x^{p^e}.
\end{equation}
The second containment in (\ref{eq1_ineq_usual_Seshadri})
implies $s(L^m; x)\geq p^{s_F(L^m; x)}-1$ and we obtain
$\epsilon(L;x)\geq \epsilon_F(L;x)$ using the definition of the Frobenius-Seshadri constant
and (\ref{formula_jet_separation}).

Given $\delta>0$, let $m_0$ be such that
$\frac{s(L^{m_0}; x)}{m_0}>\epsilon(L; x)-\delta$.
Given any positive integer $e$, let $d=\lceil n(p^e-1)/s(L^{m_0}; x)\rceil$.
Therefore we have
$$s(L^{m_0d}; x)\geq d\cdot s(L^{m_0}; x)\geq n(p^e-1).$$
The first containment in (\ref{eq1_ineq_usual_Seshadri}) implies
$s_F(L^{m_0d}; x)\geq e$, and thus
\begin{equation}\label{eq2_ineq_usual_Seshadri}
\epsilon_F(L;x)\geq \frac{p^{s_F(L^{m_0d}; x)}-1}{m_0d}\geq \frac{p^e-1}{m_0\lceil n(p^e-1)/s(L^{m_0};x)\rceil}.
\end{equation}
When $e$ goes to infinity, the right-hand side of (\ref{eq2_ineq_usual_Seshadri})
converges to $\frac{s(L^{m_0};x)}{m_0n}>\frac{1}{n}(\epsilon(L; x)-\delta)$.
We conclude that $\epsilon_F(L;x)\geq\frac{1}{n}(\epsilon(L;x)-\delta)$, and
since this holds for every $\delta>0$, we get $\epsilon_F(L;x)\geq\frac{\epsilon(L;x)}{n}$.
\end{proof}

We note that interpret the Frobenius-Seshadri constant $\epsilon_F(L;x)$ as corresponding to
$\frac{1}{\dim(X)}\epsilon(L;x)$ rather than to $\epsilon(L;x)$. This is justified by
Example~\ref{projective_space}, but also by the results in connection to adjoint linear systems
that we discuss in the next section.

\begin{remark}
If $L$ is ample and globally generated, then the usual Seshadri constant at any
smooth point $x\in X$
satisfies $\epsilon(L; x)\geq 1$ (see \cite[Example~5.1.18]{positivity}). It follows from
Proposition~\ref{ineq_usual_Seshadri} that in this case we have
$\epsilon_F(L; x)\geq\frac{1}{n}$. As Example~\ref{projective_space} shows,
this is optimal.
\end{remark}

\begin{proposition}\label{numerical}
If $L_1$ and $L_2$ are numerically equivalent ample line bundles on the projective variety $X$,
and $x\in X$ is a smooth point, then $\epsilon_F(L_1;x)=\epsilon_F(L_2;x)$.
\end{proposition}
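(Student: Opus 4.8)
The plan is to prove the inequality $\epsilon_F(L_1;x)\ge\epsilon_F(L_2;x)$; the reverse inequality then follows by symmetry, since $N:=L_1\otimes L_2^{-1}$ is numerically trivial if and only if $N^{-1}$ is, so the argument below applies verbatim with $L_1$ and $L_2$ interchanged. The idea is to take a sequence of powers $L_2^{m_\ell}$ that is near-optimal for $\epsilon_F(L_2;x)$ and transport it to $L_1$ at the cost of twisting by a \emph{fixed} power of $L_2$, controlling the relevant first cohomology group via Castelnuovo--Mumford regularity.

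First I would fix $m_0$ so that $L_2^{m_0}$ is globally generated and $H^i(X,L_2^m)=0$ for all $i\ge 1$ and $m\ge m_0$, which is exactly the hypothesis under which Remark~\ref{regularity} applies to $L_2$. The one non-formal ingredient, which I would isolate as a preliminary lemma, is the following: since the set of line bundles numerically equivalent to $\cO_X$ is a bounded family (boundedness of $\Pic^\tau(X)$), an application of Serre vanishing over this family produces an integer $m_1$, depending only on $X$, $L_2$ and $m_0$, such that $H^i(X,L_2^{m_1-im_0}\otimes Q)=0$ for $1\le i\le\dim X$ for \emph{every} numerically trivial line bundle $Q$; equivalently, $L_2^{m_1}\otimes Q$ is $0$-regular with respect to $L_2^{m_0}$ for every such $Q$.

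With this in hand, I would use Proposition~\ref{prop1}(iii) to choose $m_\ell\to\infty$ with $\frac{p^{e_\ell}-1}{m_\ell}\to\epsilon_F(L_2;x)$, where $e_\ell:=s_F(L_2^{m_\ell};x)$. For $\ell\gg 0$ we have $m_\ell\ge nm_0$, so Remark~\ref{regularity} gives that $\cF_\ell:=\frm_x^{[p^{e_\ell}]}\otimes L_2^{m_\ell+m_0}$ is $0$-regular with respect to $L_2^{m_0}$. Setting $m'_\ell:=m_\ell+m_0+m_1$ and using $L_1=L_2\otimes N$, one has
\[
\frm_x^{[p^{e_\ell}]}\otimes L_1^{m'_\ell}\;=\;\cF_\ell\otimes\bigl(L_2^{m_1}\otimes N^{m'_\ell}\bigr),
\]
whose second factor is a $0$-regular line bundle by the lemma. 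Since the tensor product of a $0$-regular coherent sheaf with a $0$-regular line bundle is again $0$-regular (a standard property of Castelnuovo--Mumford regularity, \cf \cite[\S 1.8]{positivity}), the sheaf $\frm_x^{[p^{e_\ell}]}\otimes L_1^{m'_\ell}$ is $0$-regular, so $H^1(X,\frm_x^{[p^{e_\ell}]}\otimes L_1^{m'_\ell})=0$. The exact sequence
\[
0\to\frm_x^{[p^{e_\ell}]}\otimes L_1^{m'_\ell}\to L_1^{m'_\ell}\to L_1^{m'_\ell}\otimes\cO_X/\frm_x^{[p^{e_\ell}]}\to 0
\]
then shows $s_F(L_1^{m'_\ell};x)\ge e_\ell$ (only vanishing of $H^1$ of the twisted ideal sheaf is needed here, not $H^1(X,L_1^{m'_\ell})=0$), whence
\[
\epsilon_F(L_1;x)\;\ge\;\frac{p^{e_\ell}-1}{m'_\ell}\;=\;\frac{p^{e_\ell}-1}{m_\ell}\cdot\frac{m_\ell}{m_\ell+m_0+m_1},
\]
and the right-hand side converges to $\epsilon_F(L_2;x)$ as $\ell\to\infty$.

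The step I expect to be the main obstacle is the preliminary lemma: the integer $m_1$ must be chosen uniformly over the infinitely many numerically trivial twists $N^{m'_\ell}$, $\ell\ge 1$, and it is precisely boundedness of $\Pic^\tau(X)$ — together with the existence of a Poincaré-type bundle on $X\times\Pic^\tau(X)$, after a finite base change if necessary, to which relative Serre vanishing is applied — that makes such a uniform choice possible. Everything after that is bookkeeping with regularity and the cohomology long exact sequence.
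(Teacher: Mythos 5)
Your proposal is correct, but it takes a genuinely different route from the paper's proof. The paper works with multiplication maps on global sections: using Fujita's vanishing theorem it finds $j$ so that $L_1^{j}\otimes P^{i}$ is globally generated for \emph{every} $i\in\ZZ$ (where $P=L_2\otimes L_1^{-1}$), then picks a section of $L_2^{m+j}\otimes L_1^{-m}\simeq L_1^{j}\otimes P^{m+j}$ not vanishing at $x$; multiplication by this section gives an inclusion $H^0(X,L_1^m)\hookrightarrow H^0(X,L_2^{m+j})$ compatible with restriction to $\cO_X/\frm_x^{[p^e]}$, whence $s_F(L_2^{m+j};x)\geq s_F(L_1^{m};x)$ for all $m$, and Proposition~\ref{prop1}(iii) finishes. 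You instead stay at the level of cohomology vanishing: you combine the $0$-regularity of $\frm_x^{[p^{e_\ell}]}\otimes L_2^{m_\ell+m_0}$ from Remark~\ref{regularity} with a uniform $0$-regularity bound for numerically trivial twists of $L_2^{m_1}$, invoking the product theorem for Castelnuovo--Mumford regularity to deduce $H^1$-vanishing for $\frm_x^{[p^{e_\ell}]}\otimes L_1^{m'_\ell}$. Both arguments rest on the same source of uniformity over numerically trivial twists --- you get it from boundedness of $\Pic^\tau(X)$ plus relative Serre vanishing, the paper gets it (more economically) from Fujita's vanishing theorem, which directly yields uniform vanishing over all nef, and in particular all numerically trivial, twists with no family argument needed. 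You could substitute Fujita for your preliminary lemma and shorten that part. The paper's route is slightly more elementary in that it trades the ``tensor of $0$-regular sheaves is $0$-regular'' step for an explicit multiplication-by-section inclusion; yours is equally valid and perhaps conceptually closer to how $s_F$ is controlled in Remark~\ref{regularity}, at the cost of one extra piece of regularity machinery.
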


\begin{proof}
Let us write $L_2\simeq L_1\otimes P$, where $P$ is a numerically trivial line bundle.
Note that if $A$ is a very ample line bundle on $X$, then there is $m_0$ such that
$A^m\otimes P^i$ is globally generated for every $m\geq m_0$ and every $i\in\ZZ$.
Indeed, by Fujita's vanishing theorem (see \cite{Fujita}) there is $m_1$ such that
$H^j(X, A^m\otimes L')=0$ for all $j\geq 1$, $m\geq m_1$, and all nef line bundles
$L'$. In particular, if $m\geq m_1+\dim(X)$, we see that $A^m\otimes L'$
is $0$-regular with respect to $A$, in the sense of Castelnuovo-Mumford regularity, hence it is globally generated. Therefore it is enough to take $m_0=m_1+\dim(X)$.

We now apply the above assertion with $A$ being a suitable power of $L_1$ that is very ample,
and deduce that there is a positive integer $j$ such that $L_1^{j}\otimes P^i$
is globally generated for every $i$. In particular, we see that for every positive integer $m$
we have an inclusion
$$H^0(X, L_1^m)\hookrightarrow H^0(X, L_2^{m+j})$$
induced by a section of $L_2^{m+j}\otimes L_1^{-m}\simeq L_1^j\otimes P^{m+j}$
that does not vanish at $x$. This implies $s_F(L_2^{m+j};x)\geq s_F(L_1^m;x)$ for every
$m>0$, and therefore
\begin{equation}\label{eq_numerical}
\frac{p^{s_F(L_2^{m+j};x)}-1}{m+j}\geq\frac{p^{s_F(L_1^m;x)}-1}{m}\cdot\frac{m}{m+j}.
\end{equation}
By Proposition~\ref{prop1} iii), we can find a sequence of $m$'s going to infinity
such that the right-hand side of (\ref{eq_numerical}) converges to $\epsilon_F(L_1;x)$,
and we conclude that $\epsilon_F(L_2;x)\geq\epsilon_F(L_1;x)$. The reverse inequality
follows by symmetry.
\end{proof}

\begin{remark}\label{openness}
Suppose that $L$ is an ample line bundle on the projective variety $X$.
It is standard to see that given any $m$ and $e$, the set of points $x$ in the smooth
locus $X_{\rm sm}$ of $X$ such that
$L^m$ separates $p^e$-Frobenius jets at $x$ is open in $X_{\rm sm}$, hence in $X$. Given any $\alpha>0$, we see
that the set $\{x\in X\mid \epsilon_F(L;x)>\alpha\}$ is open: indeed, by Lemma~\ref{lem1},
this set is the union of the sets of points $x\in X_{\rm sm}$ for which $L^m$ separates $p^e$-Frobenius jets at $x$, the union being over those $e$ and $m$ such that $\frac{p^e-1}{m}>\alpha$.
A formal consequence of this fact is that if the ground field is uncountable, then there is
a maximum among all $\epsilon_F(L;x)$, for $x\in X_{\rm sm}$, and this is achieved on the complement
of a countable union of closed subsets of $X$.
\end{remark}

We will make use of the next proposition in the following section, in order to obtain a criterion for point separation in the case of adjoint line bundles.

\begin{proposition}\label{several_points}
Let $L_1,\ldots,L_r$ be ample line bundles on a projective $n$-dimensional variety $X$, and
$Z=\{x_1,\ldots,x_r\}$ a finite set of smooth points on $X$. If $\epsilon_F(L_i; x_i)\geq \alpha$
for every $i$, then $\epsilon_F(L_1\otimes\ldots\otimes L_r; Z)\geq\alpha$.
\end{proposition}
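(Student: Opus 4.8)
The plan is to reduce the statement to a statement about separating Frobenius jets at the single point $x_i$ by the line bundle $L_i$ alone, and then patch the resulting sections together using global generation of the other factors. Fix $\delta>0$; it suffices to produce $m$ and $e$ with $\frac{p^e-1}{m}>\alpha-\delta$ such that $(L_1\otimes\cdots\otimes L_r)^m$ separates $p^e$-Frobenius jets at $Z$. By Proposition~\ref{prop1}(ii), applied to each $L_i$, there is a common $e_0$ so that for every $e$ divisible by $e_0$ there exist integers $m_1,\ldots,m_r$ (depending on $i$ and $e$) with $L_i^{m_i}$ separating $p^e$-Frobenius jets at $x_i$ and $\frac{p^e-1}{m_i}>\alpha-\delta$; the point of using (ii) rather than the definition directly is precisely that it lets us pin down the \emph{same} exponent $e$ for all $i$ simultaneously, which we could not do just from $\epsilon_F(L_i;x_i)\ge\alpha$.

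Next I would equalize the $m_i$. Let $m=\max_i m_i$. Since each $L_i$ is ample, after enlarging $e_0$ (equivalently, passing to $e$ large and divisible) we may assume each $L_i^{m-m_i}$ is globally generated; then by the Remark following the definition of separation of Frobenius jets, $L_i^m = L_i^{m_i}\otimes L_i^{m-m_i}$ still separates $p^e$-Frobenius jets at $x_i$. (Alternatively one avoids this by invoking Lemma~\ref{lem1} to replace each $m_i$ by a controlled multiple and then taking a common multiple; either route works, but the global-generation route keeps $\frac{p^e-1}{m}$ close to $\alpha-\delta$ with less bookkeeping.) So now we have a single $m$ with $\frac{p^e-1}{m}$ close to $\alpha-\delta$ and, for each $i$, surjectivity of $H^0(X,L_i^m)\to H^0(X,L_i^m\otimes\cO_X/\frm_{x_i}^{[p^e]})$.

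The main step is to upgrade these $r$ one-point statements to the multi-point statement for $M:=(L_1\otimes\cdots\otimes L_r)^m$. Write $M = L_i^m\otimes N_i$ where $N_i=\bigotimes_{j\ne i}L_j^m$ is globally generated (again after taking $e$, hence $m$, large enough), so there is $t_i\in H^0(X,N_i)$ nonvanishing at $x_i$. As in the Remark's commutative diagram, tensoring with $t_i$ shows $M$ separates $p^e$-Frobenius jets at the single point $x_i$, i.e. $H^0(X,M)\to H^0(X,M\otimes\cO_X/\frm_{x_i}^{[p^e]})$ is onto for each $i$. To get surjectivity onto $H^0(X,M\otimes\cO_X/I_Z^{[p^e]})=\bigoplus_{i=1}^r H^0(X,M\otimes\cO_X/\frm_{x_i}^{[p^e]})$, I would argue that each summand is hit independently: choose $s_i\in H^0(X,M)$ restricting to the prescribed jet at $x_i$, and then correct $s_i$ so that it vanishes to sufficiently high order at the other points $x_j$, $j\ne i$. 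Concretely, multiply a section of $\bigotimes_{j\ne i}L_j^{m}$ vanishing at every $x_j$ ($j\neq i$) — such exists since $\prod_{j\neq i}L_j^{m}$ is globally generated and we can impose vanishing at finitely many points after replacing $m$ by a larger value still satisfying the bound — against a section of $L_i^m$ realizing the jet at $x_i$; this product lies in $I_{Z\smallsetminus\{x_i\}}\cdot M$, hence maps to zero in the $x_j$-components for $j\ne i$ and realizes the desired jet at $x_i$. Summing over $i$ gives the required preimage, so $M$ separates $p^e$-Frobenius jets at $Z$. Since $\frac{p^e-1}{m}>\alpha-\delta$ and $\delta>0$ was arbitrary, $\epsilon_F(L_1\otimes\cdots\otimes L_r;Z)\ge\alpha$.

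The delicate point to watch is keeping the ratio $\frac{p^e-1}{m}$ bounded below by $\alpha-\delta$ while simultaneously making the various auxiliary line bundles ($L_i^{m-m_i}$, the $N_i$, and twists imposing vanishing at the finitely many points of $Z$) globally generated with sections separating those points. The clean way to handle this is: fix once and for all a $j_0$ with $L_i^{j}$ globally generated for all $i$ and all $j\ge j_0$, note that $I_{Z\smallsetminus\{x_i\}}\otimes L_i^{j}$ is globally generated for $j\gg 0$ by Castelnuovo–Mumford regularity (cf. Remark~\ref{regularity}), absorb all these bounded corrections into the exponent, and observe that replacing $m$ by $m+O(1)$ changes $\frac{p^e-1}{m}$ negligibly once $e$ (hence $m\asymp p^e/\alpha$) is large. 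I expect this bookkeeping — rather than any conceptual difficulty — to be the real content of the argument.
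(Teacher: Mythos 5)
Your overall plan is the same as the paper's (equalize the exponent $e$ over all $i$, take a common $m$, decompose $H^0(X,M\otimes\cO_X/I_Z^{[p^e]})$ as a direct sum, hit each summand independently, and send $e\to\infty$ to wash out the $O(1)$ corrections to $m$), but there is a genuine gap in the one step that carries the real content. To hit the $i$-th summand independently you need a section of $M=(L_1\otimes\cdots\otimes L_r)^m$ that realizes an arbitrary jet at $x_i$ \emph{and maps to zero} in $H^0(X,M\otimes\cO_X/\frm_{x_j}^{[p^e]})$ for every $j\neq i$. Mapping to zero there means lying in $\frm_{x_j}^{[p^e]}$, not merely vanishing at $x_j$. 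You instead take a section of $\bigotimes_{j\neq i}L_j^m$ that just lies in $I_{Z\smallsetminus\{x_i\}}$, i.e.\ vanishes to order $1$ at each $x_j$, and assert that its product with a section of $L_i^m$ ``maps to zero in the $x_j$-components.'' That is false: an element of $\frm_{x_j}$ is generally nonzero modulo $\frm_{x_j}^{[p^e]}$ as soon as $e\geq 1$. So the sections you produce do not kill the other summands, and the surjectivity argument breaks down.

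The correct input --- and this is exactly what the paper uses --- is global generation of $\frm_{x_j}^{[p^e]}\otimes L_j^{m'}$ (for a suitable $m'=m+O(1)$), supplied by the uniform Castelnuovo--Mumford $0$-regularity statement of Remark~\ref{regularity}: whenever $s_F(L_j^{m'};x_j)\geq e$ and $m'\geq n m_0$, the sheaf $\frm_{x_j}^{[p^e]}\otimes L_j^{m'+m_0}$ is globally generated. That produces $s_j\in H^0(X,\frm_{x_j}^{[p^e]}\otimes L_j^{m'+m_0})$ not vanishing at any point of $Z\smallsetminus\{x_j\}$; then $\bigotimes_{j\neq i}s_j$ plays the role your ``auxiliary factor'' was meant to play, killing the $j\neq i$ summands to the required Frobenius order while staying a unit at $x_i$. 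You cite Remark~\ref{regularity} in your final paragraph, but apply it to $I_{Z\smallsetminus\{x_i\}}\otimes L_i^j$ (order-$1$ vanishing) rather than to $\frm_{x_j}^{[p^e]}\otimes L_j^{m'}$; this is where the argument needs to be repaired. The point that this regularity bound can be taken \emph{uniform in $e$} (so that the $O(1)$ shift of $m$ does not depend on $e$ and can be absorbed in the limit) is also part of what Remark~\ref{regularity} is providing and is worth stating explicitly. The remaining bookkeeping in your write-up --- equalizing $e$ via Lemma~\ref{lem1} or Proposition~\ref{prop1}(ii), passing to $m=\max_i m_i$ after enlarging by a bounded globally-generated twist, and taking $e\to\infty$ --- is fine and matches the paper.
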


\begin{proof}
Let $m_0$ be such that for all $m\geq m_0$ the following holds: $L_i^{m}$ is globally generated, and
$H^j(X,L_i^m)=0$ for all $i$ and all $j\geq 1$.
Given $\alpha'<\alpha$, for every $i$ we may find $e_i$ and $m_i$ such that
$L_i^{m_i}$ separates $p^{e_i}$-Frobenius jets, and
$\frac{p^{e_i}-1}{m_i}>\alpha'$.
Note that using Lemma~\ref{lem1} we may always replace $e_i$ by a multiple $te_i$
if we simultaneously replace $m_i$ by $\frac{m_i(p^{te_i}-1)}{p^{e_i}-1}$. Therefore we may assume that
$e_i=e$ for all $i$, and also that
the $m_i \geq nm_0$.

If $m=\max_i m_i$, then $L_i^{m+m_0}$ separates $p^e$-Frobenius jets at $x_i$
(note that $L_i^{m+m_0-m_i}$ is globally generated).
Arguing as in Remark~\ref{regularity}, we may assume that
$\frm_{x_i}^{[p^e]}\otimes L_i^{m+2m_0}$ is $0$-regular with respect to $L_i^{m_0}$,
hence it is globally generated. We can thus find sections
$s_i\in H^0(X,\frm_{x_i}^{[p^e]}\otimes L_i^{m+2m_0})$ that do not vanish at any of the points in
$Z\smallsetminus\{x_i\}$. Since $L_i^{m+2m_0}$ separates $p^e$-Frobenius jets at $x_i$,
the restriction maps
\begin{equation}\label{eq1_several_points}
H^0(X,L_i^{m+2m_0})\to H^0(X,L_i^{m+2m_0}\otimes\cO_X/\frm_{x_i}^{[p^e]})
\end{equation}
are surjective.
Let $L=L_1\otimes\ldots\otimes L_r$ and denote by $I_Z$ the ideal of $Z$.
We deduce that the composition map
\begin{equation}\label{eq2_several_points}
\bigotimes_{i=1}^r H^0(X,L_i^{m+2m_0})\to H^0(X,L^{m+2m_0})\to H^0(X,L^{m+2m_0}\otimes
\cO_X/I_Z^{[p^e]})
\end{equation}
is surjective. Indeed, $H^0(X,L^{m+2m_0}\otimes
\cO_X/I_Z^{[p^e]})=\oplus_{i=1}^rH^0(X,L^{m+2m_0}\otimes\cO_X/\frm_{x_i}^{[p^e]})$,
and the surjectivity of (\ref{eq1_several_points}) implies that the $i^{\rm th}$ component of this direct sum is the image of
$s_1\otimes\ldots\otimes H^0(X,L_i^{m+2m_0})\otimes\ldots\otimes s_r$.
Since the map in (\ref{eq2_several_points}) factors through the restriction map
$$H^0(X,L^{m+2m_0})\to H^0(X,L^{m+2m_0}\otimes \cO_X/I_Z^{[p^e]}),$$ it follows that $L^{m+2m_0}$ separates $p^e$-Frobenius jets at $Z$, hence
$\epsilon_F(L;Z)\geq\frac{p^e-1}{m+2m_0}$. Furthermore, the same inequality will hold if we replace $e$ by $te$ and $m$ by $\frac{m(p^{te}-1)}{p^e-1}$, that is,
\begin{equation}\label{eq3_several_points}
\epsilon_F(L; Z)\geq \frac{1}{\frac{m}{p^e-1}+\frac{2m_0}{p^{te-1}}}.
\end{equation}
When $t$ goes to infinity, the right-hand side of (\ref{eq3_several_points}) converges to
$\frac{p^e-1}{m}$, which is $>\alpha'$. We thus obtain $\epsilon_F(L; Z)>\alpha'$
for every $\alpha'<\alpha$, which gives the statement of the proposition.
\end{proof}

\section{Frobenius-Seshadri constants and  adjoint bundles}

In this section we show how lower bounds on Frobenius-Seshadri constants imply
global generation or very ampleness of adjoint line bundles. As in the previous section,
all our varieties are defined over an algebraically closed field of characteristic $p>0$.

\begin{theorem}\label{adjoint}
Let $L$ be an ample line bundle on a smooth projective variety $X$.
\begin{enumerate}
\item[i)] If $Z=\{x_1,\ldots,x_r\}$ is a finite subset of $X$ such that $\epsilon_F(L; x_i)
>r$ for every $i$, then the restriction map
$$H^0(X,\omega_X\otimes L)\to H^0(X,\omega_X\otimes L\otimes\cO_Z)$$
is surjective.
\item[ii)] If $\epsilon_F(L;x)>1$ for every $x\in X$, then $\omega_X\otimes L$ is globally generated.
\item[iii)] If $\epsilon_F(L;x)>2$ for some $x\in X$, then $\omega_X\otimes L$
defines a rational map that is birational onto its image.
\item[iv)] If $\epsilon_F(L; x)>2$ for every $x\in X$, then $\omega_X\otimes L$
is very ample.
\end{enumerate}
\end{theorem}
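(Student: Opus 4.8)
The engine is the Grothendieck--Serre trace of Frobenius. Since $X$ is smooth, $F^e\colon X\to X$ is finite and flat, so duality gives a canonical surjection $\operatorname{Tr}_e\colon F^e_*\omega_X\to\omega_X$ (equivalently, the Cartier operator is surjective); twisting by a line bundle $M$ and using $(F^e)^*M\cong M^{p^e}$ and the projection formula yields a surjection $F^e_*(\omega_X\otimes M^{p^e})\twoheadrightarrow\omega_X\otimes M$. The local fact I would need, and would check by a direct computation in the Gorenstein Artinian ring $\cO_{X,x}/\frm_x^{[p^e]}\cong k[y_1,\dots,y_n]/(y_1^{p^e},\dots,y_n^{p^e})$ (whose socle is spanned by $y_1^{p^e-1}\cdots y_n^{p^e-1}$), is that $\operatorname{Tr}_e$ carries $F^e_*(\frm_x^{[p^e]}\omega_X)$ \emph{onto} $\frm_x\omega_X$. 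Hence for a reduced finite set $Z$ of smooth points, $\operatorname{Tr}_e$ descends to a surjection $F^e_*(\omega_X\otimes M^{p^e}\otimes\cO_X/I_Z^{[p^e]})\twoheadrightarrow\omega_X\otimes M\otimes\cO_Z$ compatible with the restriction maps from $F^e_*(\omega_X\otimes M^{p^e})$ and from $\omega_X\otimes M$; and $H^0(X,F^e_*(-))=H^0(X,-)$.

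For (i): if $\epsilon_F(L;x_i)>r$ for all $i$, then Proposition~\ref{several_points} with $L_1=\dots=L_r=L$ gives $\epsilon_F(L^r;Z)>r$, and Proposition~\ref{prop2} then gives $\epsilon_F(L;Z)>1$. Fix $c$ with $1<c<\epsilon_F(L;Z)$; by Proposition~\ref{prop1}(ii), for all sufficiently large $e$ in a fixed arithmetic progression there is $m=m(e)$ with $L^m$ separating $p^e$-Frobenius jets at $Z$ and $(p^e-1)/m>c$, so $m<p^e$ and $p^e-m\geq p^e(1-1/c)+1/c\to\infty$. Choosing $e$ large enough that $\omega_X\otimes L^{p^e-m}$ is globally generated, the observation that separating $p^e$-Frobenius jets is preserved under tensoring with a globally generated bundle shows $\omega_X\otimes L^{p^e}=L^m\otimes(\omega_X\otimes L^{p^e-m})$ separates $p^e$-Frobenius jets at $Z$, i.e.\ $H^0(X,\omega_X\otimes L^{p^e})\to H^0(X,\omega_X\otimes L^{p^e}\otimes\cO_X/I_Z^{[p^e]})$ is surjective. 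Now chase the commutative square with this map as top row, $H^0(X,\omega_X\otimes L)\to H^0(X,\omega_X\otimes L\otimes\cO_Z)$ as bottom row, and vertical maps induced by $\operatorname{Tr}_e$ with $M=L$: the right vertical map is a surjection of sheaves supported on the finite set $Z$, hence surjective on $H^0$, so together with surjectivity of the top row the bottom row is surjective. This is (i).

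Part (ii) is (i) with $r=1$, $Z=\{x\}$, at every $x\in X$. For (iii) and (iv) let $\varphi$ denote the map defined by $\omega_X\otimes L$, a morphism wherever $\omega_X\otimes L$ is globally generated (in particular on $\{x:\epsilon_F(L;x)>1\}$, so on all of $X$ in case (iv)); separating a pair of distinct points $x\ne y$ is exactly (i) with $r=2$, $Z=\{x,y\}$. Thus in case (iv) $\varphi$ is injective on closed points, and in case (iii) $\varphi$ is injective on the nonempty open set $\{x:\epsilon_F(L;x)>2\}$ (open by Remark~\ref{openness}), hence generically injective.

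The remaining step, which I expect to be the main obstacle, is to deduce from $\epsilon_F(L;x)>2$ that $\omega_X\otimes L$ separates $1$-jets at $x$, i.e.\ that $H^0(X,\omega_X\otimes L)\to H^0(X,\omega_X\otimes L\otimes\cO_X/\frm_x^2)$ is surjective; this makes $\varphi$ an immersion at $x$, which upgrades injectivity to a closed immersion (giving (iv)) and --- since an immersion at a general point rules out the rational map being inseparable of degree $>1$ --- to birationality onto the image (giving (iii)). I would attempt this by running the same argument one Frobenius step lower: $\operatorname{Tr}_{e-1}$ carries $F^{e-1}_*((\frm_x^2)^{[p^{e-1}]}\omega_X)$ onto $\frm_x^2\omega_X$, and since $\frm_x^{[p^e]}\subseteq(\frm_x^2)^{[p^{e-1}]}$, the corresponding commutative square reduces the claim to producing from some $L^m$ that separates $p^e$-Frobenius jets at $x$ a bundle $\omega_X\otimes L^{p^{e-1}}$ that also separates $p^e$-Frobenius jets at $x$. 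For $p=2$ this works, since $\epsilon_F(L;x)>2$ forces $m<p^{e-1}$, so $\omega_X\otimes L^{p^{e-1}}=L^m\otimes(\omega_X\otimes L^{p^{e-1}-m})$ is $L^m$ tensored by a globally generated bundle. For $p\geq3$ the bound $m<(p^e-1)/2$ only gives $m<\tfrac{p}{2}\,p^{e-1}$, which may exceed $p^{e-1}$; keeping the conclusion on $\omega_X\otimes L$ itself rather than on a higher power then requires a more careful choice of the auxiliary twist (or an additional idea), and this is where I expect the real work of (iii) and (iv) to lie.
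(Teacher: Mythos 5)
Your treatment of parts (i) and (ii) matches the paper's proof in all essentials: reduce via Proposition~\ref{several_points} and Proposition~\ref{prop2} to $\epsilon_F(L;Z)>1$, arrange $m<p^e$ with $p^e-m\to\infty$, observe that $\omega_X\otimes L^{p^e}=L^m\otimes(\omega_X\otimes L^{p^e-m})$ separates $p^e$-Frobenius jets at $Z$ once $\omega_X\otimes L^{p^e-m}$ is globally generated, and push through the commutative square built from the trace $T^e\colon F^e_*\omega_X\to\omega_X$. Your framework for (iii) and (iv) --- point separation from (i) with $r=2$, plus separation of $1$-jets at $x$ via a $(\frm_x^2)^{[\cdot]}$-version of the trace argument, with the immersion-at-a-general-point observation disposing of inseparability in (iii) --- is also the right shape and agrees with what the paper does.

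The genuine gap, which you yourself flag, is in establishing the needed $1$-jet separation. Your proposed route is to use the $p^e$-Frobenius jet separation provided directly by the definition of $\epsilon_F$, drop one Frobenius step to $T^{e-1}$, and exploit $\frm_x^{[p^e]}\subseteq(\frm_x^2)^{[p^{e-1}]}$. This needs $\omega_X\otimes L^{p^{e-1}}$ to separate $p^e$-Frobenius jets, hence wants $m\leq p^{e-1}$ (so that $\omega_X\otimes L^{p^{e-1}}=L^m\otimes(\text{globally generated})$); but $\epsilon_F(L;x)>2$ only forces $m<(p^e-1)/2$, i.e. $m<\tfrac{p}{2}p^{e-1}$, which is $\leq p^{e-1}$ only when $p=2$. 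So as written the argument only works in characteristic~$2$.

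The paper avoids this by \emph{staying at level $e$}. It does not try to deduce $(\frm_x^2)^{[\cdot]}$-jet separation from $\frm_x^{[\cdot]}$-jet separation by an inclusion of ideals; instead it proves an independent auxiliary result, Lemma~\ref{sep_two_jets}: from $\epsilon_F(L;x)>\alpha$ one produces $r,e$ with $\frac{p^e-1}{r}>\alpha/2$ and $H^0(X,L^r)\twoheadrightarrow H^0(X,L^r\otimes\cO_X/(\frm_x^2)^{[p^e]})$. The factor $1/2$ is the price of passing from $\frm_x^{[p^e]}$ to $(\frm_x^2)^{[p^e]}$, and it is exactly matched by the hypothesis $\epsilon_F>2$: with $\alpha=2$ one gets $r<p^e-1$, and then the same $T^e$-diagram, now with $(\frm_x^2)^{[p^e]}$ on top and $\frm_x^2$ on the bottom, gives surjectivity of $H^0(X,\omega_X\otimes L)\to H^0(X,\omega_X\otimes L\otimes\cO_X/\frm_x^2)$. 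The proof of Lemma~\ref{sep_two_jets} is where the real work is: one takes $r=2m+m_0$, and builds a section hitting a given class in $\frm_x^{[p^e]}/(\frm_x^2)^{[p^e]}$ as a product $\sum t_i\otimes\widetilde f_i$, using that $\frm_x^{[p^e]}\otimes L^{m+m_0}$ is globally generated (by a regularity argument) to supply the $t_i$ and the original $p^e$-Frobenius jet separation of $L^m$ to supply the $\widetilde f_i$. That multiplicativity step is the idea you are missing, and without it (or some substitute) parts (iii) and (iv) are not established for $p\geq 3$.
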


In particular, we obtain the following criterion for global generation and very ampleness
of adjoint line bundles in terms of the usual Seshadri constants. Note that the bounds are the same ones as in characteristic zero, when the assertions are proved via vanishing theorems
(see \cite[Proposition~5.1.19]{positivity}).

\begin{corollary}
\label{cor.adjoint}
Let $L$ be an ample line bundle on a smooth $n$-dimensional projective variety $X$.
\begin{enumerate}
\item[i)] If $\epsilon(L; x)>n$, then
$\omega_X\otimes L$ is globally generated at $x$. In particular, if
$\epsilon(L;x)>n$ for every $x\in X$, then $\omega_X\otimes L$ is globally generated.
\item[ii)] If $\epsilon(L; x)>2n$ for some $x\in X$, then $\omega_X\otimes L$
defines a rational map that is birational onto its image.
\item[iii)] If $\epsilon(L; x)>2n$ for every $x\in X$, then
$\omega_X\otimes L$ is very ample.
\end{enumerate}
\end{corollary}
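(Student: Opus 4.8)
The plan is to deduce all three parts directly from Theorem~\ref{adjoint}, converting each hypothesis on the usual Seshadri constant into one on the Frobenius-Seshadri constant by means of the inequality $\epsilon_F(L;x)\geq \epsilon(L;x)/n$ established in Proposition~\ref{ineq_usual_Seshadri}. The proof is therefore entirely formal: the substantive content has already been placed in Theorem~\ref{adjoint} and Proposition~\ref{ineq_usual_Seshadri}, and the factor $n$ in the hypotheses $\epsilon(L;x)>n$ and $\epsilon(L;x)>2n$ is precisely what is needed to absorb the loss in passing from $\epsilon$ to $\epsilon_F$.

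For (i), suppose first that $\epsilon(L;x)>n$ at a given point $x$. Then Proposition~\ref{ineq_usual_Seshadri} gives $\epsilon_F(L;x)\geq \epsilon(L;x)/n>1$. Apply Theorem~\ref{adjoint}(i) with $Z=\{x\}$, i.e.\ $r=1$: the restriction map $H^0(X,\omega_X\otimes L)\to H^0(X,\omega_X\otimes L\otimes\cO_{\{x\}})$ is surjective, which is exactly the assertion that $\omega_X\otimes L$ is globally generated at $x$. If this holds for every $x\in X$, then $\omega_X\otimes L$ is globally generated, as claimed. For (ii), if $\epsilon(L;x)>2n$ for some $x$, then $\epsilon_F(L;x)>2$ for that $x$, and Theorem~\ref{adjoint}(iii) shows that $\omega_X\otimes L$ defines a rational map birational onto its image. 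For (iii), if $\epsilon(L;x)>2n$ holds for every $x\in X$, then $\epsilon_F(L;x)>2$ for every $x$, and Theorem~\ref{adjoint}(iv) gives very ampleness.

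There is no real obstacle here; the only points requiring a little care are bookkeeping ones: that ``globally generated at $x$'' is the $r=1$ instance of Theorem~\ref{adjoint}(i), and that the numerical thresholds $n$ and $2n$ match those obtained in characteristic zero via the Kawamata--Viehweg vanishing theorem precisely because $\epsilon_F(L;x)\geq\epsilon(L;x)/n$ is sharp (as Example~\ref{projective_space} shows).
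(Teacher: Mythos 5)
Your proposal is correct and matches the paper's own proof: both simply combine the inequality $\epsilon_F(L;x)\geq\epsilon(L;x)/n$ from Proposition~\ref{ineq_usual_Seshadri} with the corresponding parts of Theorem~\ref{adjoint}. The only difference is that you spell out which clause of the theorem handles which assertion, whereas the paper leaves this bookkeeping implicit.
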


\begin{proof}
For every $x\in X$, we have $\epsilon_F(L;x)\geq\frac{\epsilon(L; x)}{n}$ by
 Proposition~\ref{ineq_usual_Seshadri}. Therefore all assertions follow from
 Theorem~\ref{adjoint}.
\end{proof}

In the proof of Theorem~\ref{adjoint} we will need the following lemma in order to separate tangent vectors.

\begin{lemma}\label{sep_two_jets}
Let $L$ be an ample line bundle on the projective variety $X$, and $x\in X$ a smooth point.
If $\epsilon_F(L;x)>\alpha$ for some $\alpha\in\RR_{>0}$, then we can find
$r$ and $e$ with $\frac{p^e-1}{r}>\frac{\alpha}{2}$ and such that the restriction map
\begin{equation}\label{eq1_sep_two_jets}
\Phi\colon H^0(X,L^{r})\to H^0(X, L^{r}\otimes\cO_X/(\frm_x^2)^{[p^e]})
\end{equation}
is surjective. Furthermore, we may assume that $p^e-1-\frac{\alpha}{2}r$ is arbitrarily large.
\end{lemma}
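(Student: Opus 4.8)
The plan is to construct enough sections of $L^r$ by hand; no vanishing theorem will be used. First I fix the numerical data. Since $\epsilon_F(L;x)>\alpha$ there are $m_1,e_1\geq 1$ with $L^{m_1}$ separating $p^{e_1}$-Frobenius jets at $x$ and $(p^{e_1}-1)/m_1>\alpha$ (Proposition~\ref{prop1}\,i)); replacing $e_1$ by $s_F(L^{m_1};x)$ we may assume $e_1=s_F(L^{m_1};x)$. Fix $m_0$ so that $L^{m_0}$ is globally generated and $H^i(X,L^m)=0$ for all $i\geq 1$, $m\geq m_0$. Exactly as in the proof of Proposition~\ref{prop2}, for every $t\geq 1$ Lemma~\ref{lem1} gives that $L^{m_1d_t}$ separates $p^{te_1}$-Frobenius jets at $x$, where $d_t=(p^{te_1}-1)/(p^{e_1}-1)$, and $(p^{te_1}-1)/(m_1d_t)=(p^{e_1}-1)/m_1>\alpha$. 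Put $m:=m_1d_t$, $e:=te_1$, $q:=p^e$ and $r:=2m+m_0$. Then $p^e-1-\alpha m=d_t(p^{e_1}-1-\alpha m_1)$ is positive and tends to $\infty$ with $t$; choosing $t$ large we may also assume $m\geq nm_0$. Since
\[
p^e-1-\tfrac{\alpha}{2}r=(p^e-1-\alpha m)-\tfrac{\alpha}{2}m_0,
\]
this quantity is arbitrarily large, hence positive, so $(p^e-1)/r>\alpha/2$. It remains to prove that $\Phi$ is surjective, i.e.\ that $L^r$ separates $(\frm_x^2)^{[q]}$-jets at $x$.

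Now the local set-up. Fix an isomorphism $L_x\simeq\cO_{X,x}$, inducing compatible trivializations of all $L^k$ at $x$; then the target of $\Phi$ is identified with $R:=\cO_{X,x}/(\frm_x^2)^{[q]}$. With a regular system of parameters $y_1,\dots,y_n$ at $x$ one has $(\frm_x^2)^{[q]}=(\frm_x^{[q]})^2$ (both are generated by the $y_i^{q}y_j^{q}$), so there is an exact sequence $0\to\mathfrak n\to R\to A\to 0$ with $A:=\cO_{X,x}/\frm_x^{[q]}$ and $\mathfrak n:=\frm_x^{[q]}/(\frm_x^{[q]})^2$, where $\mathfrak n^2=0$ and $\mathfrak n$ is naturally an $A$-module. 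To see that $\Phi$ is surjective it suffices to check that $\mathrm{Im}(\Phi)$ surjects onto $A$ and contains $\mathfrak n$: then for $\rho\in R$ we pick $\tau\in\mathrm{Im}(\Phi)$ mapping to the class of $\rho$ in $A$, and $\rho-\tau\in\mathfrak n\subseteq\mathrm{Im}(\Phi)$. The first assertion is immediate: $L^m$ separates $p^e$-Frobenius jets and $L^{m+m_0}$ is globally generated, so (by the Remark saying that tensoring a bundle which separates $p^e$-Frobenius jets by a globally generated one preserves this) $L^r=L^m\otimes L^{m+m_0}$ separates $p^e$-Frobenius jets, which is precisely $H^0(X,L^r)\twoheadrightarrow A$.

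For the second assertion I would invoke Remark~\ref{regularity}: since $m\geq nm_0$ and $L^m$ separates $p^e$-Frobenius jets at $x$, the sheaf $\frm_x^{[q]}\otimes L^{m+m_0}$ is globally generated. Let $U\subseteq\mathfrak n$ be the image of $H^0(X,\frm_x^{[q]}\otimes L^{m+m_0})$ under $s\mapsto\overline{s_x}$ (the class of $s_x\in\frm_x^{[q]}$ modulo $(\frm_x^{[q]})^2$, via the trivialization). Global generation at $x$ says the $s_x$ generate the ideal $\frm_x^{[q]}$ over $\cO_{X,x}$, hence $U$ generates the $A$-module $\mathfrak n$; equivalently, the $k$-span of $A\cdot U$ is all of $\mathfrak n$. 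Now for $s\in H^0(X,\frm_x^{[q]}\otimes L^{m+m_0})\subseteq H^0(X,L^{m+m_0})$ and $t\in H^0(X,L^m)$ we have $st\in H^0(X,L^r)$, and I claim
\[
\Phi(st)=\overline{s_x}\cdot\overline{t_x}\in\mathfrak n,
\]
the product of $\overline{s_x}\in U$ with the class $\overline{t_x}\in A$ of $t_x$: writing $t_x=g+v$ with $v\in\frm_x^{[q]}$ we get $s_xt_x=s_xg+s_xv$ with $s_xv\in\frm_x^{[q]}\cdot\frm_x^{[q]}=(\frm_x^2)^{[q]}$, so $\Phi(st)$ depends only on $\overline{s_x}$ and on $g\bmod\frm_x^{[q]}=\overline{t_x}$, and equals $\overline{s_x}\cdot\overline{t_x}$ because $\mathfrak n$ is annihilated by $\frm_x^{[q]}$. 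As $\overline{s_x}$ runs over $U$ and $\overline{t_x}$ over all of $A$ (by the hypothesis on $L^m$), the elements $\Phi(st)$ fill up $\{u\bar a:u\in U,\ \bar a\in A\}$, whose $k$-span is $A\cdot U=\mathfrak n$. Hence $\mathrm{Im}(\Phi)\supseteq\mathfrak n$, and the lemma follows.

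The one genuinely delicate point is the multiplication identity $\Phi(st)=\overline{s_x}\cdot\overline{t_x}$: the uncontrolled higher-order part $v$ of $t_x$ is killed \emph{only} because $s_x$ lies in $\frm_x^{[q]}$ and $\frm_x^{[q]}\cdot\frm_x^{[q]}=(\frm_x^2)^{[q]}$, which is exactly why we twist by $\frm_x^{[q]}\otimes L^{m+m_0}$ rather than trying to realise individual monomials in $R$. Granting that identity, the step ``$U$ generates $\mathfrak n$ over $A$'' plus ``$H^0(X,L^m)$ surjects onto $A$'' $\Rightarrow$ ``the $u\bar a$ span $\mathfrak n$'' is elementary, and the numerical bookkeeping (that Lemma~\ref{lem1} yields the stated growth of $p^e-1-\tfrac\alpha2r$) is routine.
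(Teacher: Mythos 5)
Your proof is correct and follows essentially the same route as the paper's: you use Lemma~\ref{lem1} to arrange the numerics, take $r=2m+m_0$, apply Remark~\ref{regularity} to globally generate $\frm_x^{[p^e]}\otimes L^{m+m_0}$, and then produce the needed sections of $L^r$ as products $s\cdot t$, with the error term killed by $\frm_x^{[p^e]}\cdot\frm_x^{[p^e]}=(\frm_x^2)^{[p^e]}$. Your presentation via the extension $0\to\mathfrak n\to R\to A\to 0$ is simply a cleaner repackaging of the paper's reduction to hitting the subspace $W=H^0(X,L^{2m+m_0}\otimes\frm_x^{[p^e]}/(\frm_x^2)^{[p^e]})$, and the key identity $\Phi(st)=\overline{s_x}\cdot\overline{t_x}$ is the same computation the paper does with the sections $t_i\otimes\widetilde f_i$.
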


\begin{proof}
Let $m_0$ be such that for all $m\geq m_0$, the following hold: $L^m$ is globally generated
and $H^i(X, L^m)=0$ for all $i\geq 1$.
It follows from definition that we can find $e$ and $m$ such that
$\frac{p^e-1}{m}>\alpha$ and the restriction map
\begin{equation}\label{eq2_sep_two_jets}
H^0(X, L^m)\to H^0(X,L^m\otimes\cO_X/\frm_x^{[p^e]})
\end{equation}
is surjective. By Lemma~\ref{lem1}, we may replace $e$ by $se$ and
$m$ by $\frac{m(p^{se}-1)}{p^e-1}$ for every $s\geq 1$. In particular, we may assume that
$m$ is arbitrarily large. We may also assume that
$\frac{p^e-1}{2m+m_0}>\frac{\alpha}{2}$. Indeed, we have
$$\lim_{s\to\infty}\frac{p^{se}-1}{2\frac{m(p^{se}-1)}{p^e-1}+m_0}=\frac{p^e-1}{2m}>
\frac{\alpha}{2}.$$
Furthermore, arguing as in Remark~\ref{regularity}, we may assume that
$\frm_x^{[p^e]}\otimes L^{m+m_0}$ is $0$-regular with respect to $L^{m_0}$, hence it is globally generated. In order to prove the first assertion in the lemma, it is enough to show
that  if we take $r=2m+m_0$, then (\ref{eq1_sep_two_jets}) is surjective.

\begin{claim}
It is enough to show that
$W:=H^0(X,L^{2m+m_0}\otimes(\frm_x^{[p^e]}/(\frm_x^2)^{[p^e]}))$ is contained in the image of
$\Phi$.
\end{claim}
\begin{proof}[Proof of claim]
Indeed, it follows from the surjectivity of (\ref{eq2_sep_two_jets}) and the fact that
$L^{m+m_0}$ is globally generated that the restriction map
$$\phi\colon H^0(X, L^{2m+m_0})\to H^0(X,L^{2m+m_0}\otimes\cO_X/\frm_x^{[p^e]})$$
is surjective as well. We deduce that if
$u\in H^0(X,L^{2m+m_0}\otimes\cO_X/(\frm_x^2)^{[p^e]})$
restricts to $\overline{u}\in H^0(X,L^{2m+m_0}\otimes\cO_X/\frm_x^{[p^e]})$, then there
is $s\in H^0(X, L^{2m+m_0})$ such that $\phi(s)=\overline{u}$. Therefore
$u-\Phi(s)\in W$, and if $u-\Phi(s)$ lies in the image of $\Phi$, then so does $u$.
\end{proof}
Let us choose a trivialization of $L$ around $x$, which induces an isomorphism
$L_x\simeq\cO_{X,x}$ that we henceforth tacitly use.
Since $\frm_x^{[p^e]}\otimes L^{m+m_0}$ is globally generated, it follows that given any
$w\in \frm_x^{[p^e]}/(\frm_x^2)^{[p^e]}$, there are $t_1,\ldots,t_d\in H^0(X, \frm_x^{[p^e]}\otimes L^{m+m_0})$ and $f_1,\ldots,f_d\in\cO_{X,x}$ such that
$w=\sum_{i=1}^dt_{i,x}f_{i,x}\,{\rm mod}\,(\frm_x^2)^{[p^e]}$ (here $t_{i,x}$ denotes the restriction of $t_i$ to the stalk at $x$).
Using now the surjectivity of (\ref{eq2_sep_two_jets}), we can find
$\widetilde{f}_i\in H^0(X, L^m)$ such that
$f_{i,x}\equiv \widetilde{f}_{i,x}\,\text{mod}\,\frm_x^{[p^e]}$.
This implies that if $t=\sum_{i=1}^dt_i\otimes\widetilde{f}_i\in H^0(X, L^{2m+m_0})$,
then $w\equiv t_x\,\text{mod}\,(\frm_x^2)^{[p^e]}$. This completes the proof of the fact that
$W$ is contained in the image of $\Phi$.

The last assertion in the lemma
follows from the fact that
$$p^{se}-1-\frac{\alpha}{2}\left(\frac{2m(p^{se}-1)}{p^e-1}+m_0\right)
=(p^{se}-1)\left(1-\alpha\frac{m}{p^e-1}\right)-\frac{m_0\alpha}{2}\to\infty\,\text{when}\,
s\to\infty.$$
\end{proof}

The key ingredient in the proof of Theorem~\ref{adjoint} is the canonical surjective map
$T\colon F_*\omega_X\to\omega_X$. This can be defined as the trace map for
relative duality for the finite flat morphism $F$ (see for example \cite[Chapter 1]{BK}).
It can also be explicitly described as follows. Recall that $X$ is smooth, and suppose that
$U\subseteq X$ is an affine open subset and $y_1,\ldots,y_n\in\cO(U)$ are such that $dy_1,\ldots,dy_n$ give a trivialization of
$\Omega_X$. After possibly replacing $U$ by a suitable open cover, we may assume that
$\cO(U)$ is free over $\cO(U)^p$ with basis
$$\{y_1^{i_1}\cdots y_n^{i_n}\mid 0\leq i_{\ell}\leq p-1\,\text{for all}\,\ell\}.$$
If we put $dy=dy_1\wedge\ldots\wedge dy_n$, then
$T\colon\cO(U)dy\to\cO(U)dy$ is uniquely characterized by the following properties:
\begin{enumerate}
\item[1)] $T(f^p\eta)=f\cdot T(\eta)$ for every $f\in\cO(U)$ and every $\eta\in\cO(U)dy$.
\item[2)] $T(y_1^{i_1}\cdots y_n^{i_n}dy)=y_1^{\frac{i_1-p+1}{p}}\cdots y_n^{\frac{i_n-p+1}{p}}dy$
for every $i_1,\ldots,i_n\in\ZZ_{\geq 0}$ (with the convention that the expression on the right-hand side is zero, unless all exponents are integers).
\end{enumerate}

We will also consider the $e$-iterate of $T$, namely $T^e\colon F^e_*(\omega_X)\to
\omega_X$. Note that if $J$ is an ideal of $\cO_X$, then
$$T^e(F^e_*(J^{[p^e]}\cdot\omega_X))=J\cdot T^e(F^e_*(\omega_X))=J\cdot \omega_X.$$
We can now prove the main result of this section.

\begin{proof}[Proof of Theorem~\ref{adjoint}]
We first prove i). Let $I_Z$ denote the ideal of $Z$.
Note that the hypothesis, together with Proposition~\ref{several_points}
implies that $\epsilon_F(L; Z)=\frac{\epsilon_F(L^r;Z)}{r}>1$.
Therefore we can find $m$ and $e$ such that
$m<p^e-1$ and $L^m$ separates $p^e$-Frobenius jets at $Z$.
Furthermore, by Lemma~\ref{lem1} we may replace $e$ by $se$ and
$m$ by $\frac{m(p^{se}-1)}{p^e-1}$ for every $s\geq 1$. Since
$$p^{se}-1-\frac{m(p^{se}-1)}{p^e-1}=\frac{(p^e-1-m)(p^{se}-1)}{p^e-1}\to\infty\,\,\text{when}\,\,
s\to\infty,$$
it follows that we may assume that $p^e-1-m$ is as large as we want. In particular,
we may assume that $\omega_X\otimes L^{p^e-m}$ is globally generated, in which case
we deduce that $\omega_X\otimes L^{p^e}$ separates $p^e$-Frobenius jets at $Z$.

We now make use of the surjective map $T^e\colon F^e_*(\omega_X)\to \omega_X$.
As we have seen, this induces a surjective map $F^e_*(I_Z^{[p^e]}\omega_X)\to I_Z\omega_X$. Tensoring this by
$L$ and using the projection formula gives a surjective map
$F^e_*(I_Z^{[p^e]} \omega_X\otimes L^{p^e})\to I_Z\omega_X\otimes L$.
Since $F^e_*$ is an exact functor, we obtain
a commutative diagram with exact rows and surjective vertical maps

\[\label{diag2}
\begin{CD}
0@>>>F^e_*(I_Z^{[p^e]} \omega_X\otimes L^{p^e})@>>> F^e_*(\omega_X\otimes L^{p^e})
@>>> F^e_*(\omega_X\otimes L^{p^e}\otimes\cO_X/I_Z^{[p^e]})@>>> 0 \\
@|@VVV@VV{T^e\otimes L}V@VVV@| \\
0@>>> I_Z\omega_X\otimes L@>>>  \omega_X\otimes L@>>>\omega_X\otimes L\otimes
\cO_X/I_Z@>>>0
\end{CD}
\]

By taking global sections we obtain a commutative diagram
\[\label{diag3}
\begin{CD}
H^0(X,\omega_X\otimes L^{p^e})@>{\phi}>>H^0(X,\omega_X\otimes L^{p^e}\otimes\cO_X/I_Z^{[p^e]})\\
@VVV@VV{\rho}V\\
H^0(X,\omega_X\otimes L)@>{\psi}>>H^0(X,\omega_X\otimes L\otimes\cO_X/I_Z)
\end{CD}
\]
in which $\rho$ is surjective. Since $\omega_X\otimes L^{p^e}$ separates $p^e$-Frobenius
jets at $Z$, we have that $\phi$ is surjective, and we thus obtain that $\psi$ is surjective, which completes the proof of i).

The assertion in ii) follows from i), by considering $Z=\{x\}$, for a point $x\in X$.
Under the assumption in iii), it follows from Remark~\ref{openness} that there is an open subset
$U\subseteq X$ such that $\epsilon_F(L; y)>2$ for every $y\in U$. In order to prove iii),
it is enough to show that $\omega_X\otimes L$ separates points and tangent vectors
on $U$. By taking $Z=\{y_1,y_2\}$ for $y_1$ and $y_2$ distinct points in $U$, it follows from
i) that $\omega_X\otimes L$ separates points in $U$. Suppose now that $x\in U$.
The hypothesis together with Lemma~\ref{sep_two_jets} implies
that we can find $e$ and $m$ such that $m<p^e-1$ and such that the restriction map
$$H^0(X,L^m)\to H^0(X,L^m\otimes\cO_X/(\frm_x^2)^{[p^e]})$$
is surjective. Furthermore, since we may assume that $p^e-1-m$ is large enough, we may assume that $\omega_X\otimes L^{p^e-m}$
is globally generated, hence also the restriction map
$$H^0(X,\omega_X\otimes L^{p^e})\xrightarrow{\varphi'} H^0(X,\omega_X\otimes L^{p^e}
\otimes\cO_X/(\frm_x^2)^{[p^e]})$$
is surjective. Arguing as in the proof of i), we obtain a commutative diagram
\[\label{diag4}
\begin{CD}
H^0(X,\omega_X\otimes L^{p^e})@>{\phi'}>>H^0(X,\omega_X\otimes L^{p^e}
\otimes\cO_X/(\frm_x^2)^{[p^e]})\\
@VVV@VV{\rho'}V\\
H^0(X,\omega_X\otimes L)@>{\psi'}>>H^0(X,\omega_X\otimes L\otimes\cO_X/\frm_x^2)
\end{CD}
\]
with $\rho'$ surjective. Since $\phi'$ is surjective, we deduce that
$\psi'$ is surjective, that is, $\omega_X\otimes L$ separates tangent vectors at $x$. This completes the proof of iii). The assertion in iv) now follows from the above argument, by taking
$U=X$.
\end{proof}

\section{Frobenius-Seshadri constants on toric varieties}

It is well-known that the Seshadri constants at the torus-fixed points of a smooth toric variety
can be explicitly described in terms of polyhedral geometry (see \cite{DiRocco} and
\cite{B+}). Our goal in this section is to give
a similar description for the Frobenius-Seshadri constant.
We freely use basic facts and notation on toric varieties from \cite{Fulton}.

Let $N\simeq\ZZ^n$ be a lattice and $M={\rm Hom}_{\ZZ}(N,\ZZ)$ the dual lattice.
We consider a smooth projective toric variety $X$  corresponding to a
 fan $\Delta$ in $N_{\RR}=N\otimes_{\ZZ}\RR$.
 We assume that $X$ is defined over an algebraically closed field $k$
 of characteristic $p>0$. Let $L$ be an
 ample line bundle  on $X$ and $x\in X$
a torus-fixed point. This point corresponds to an $n$-dimensional cone
$\sigma\in\Delta$. We assume that $X$ is smooth and so $\sigma$ is a non-singular cone. We can thus
choose a basis $e_1,\ldots,e_n$ of $N$ such that $\sigma$ is the convex cone generated by
these vectors. If $e_1^*\ldots,e_n^*\in M$ give the dual  basis and we put
$t_i=\chi^{e_i^*}$, then $\cO(U_{\sigma})$
is a polynomial $k$-algebra in $t_1,\ldots,t_n$, and $x\in U_{\sigma}$ is defined by the
ideal $(t_1,\ldots,t_n)$.

There is a torus-invariant divisor $D$ such that $L\simeq\cO_X(D)$. We choose the unique
such $D$ which is effective and whose restriction to the open affine subset $U_{\sigma}$
corresponding to $\sigma$ is zero. Let $P_D\subseteq M_{\RR}\simeq\RR^n$ denote the lattice polytope corresponding
to $D$. Recall that $P_D$ is the convex hull of those $u\in M$ such that
${\rm div}(\chi^u)+D\geq 0$. In particular, we have $P_D\subseteq\RR_+^n$.
Furthermore, since $L$ is ample, it follows that $\Delta$ is the normal fan of $P_D$.
By our normalization of $D$, this implies that the vertex of $P_D$ corresponding to the cone
$\sigma\in\Delta$ is the origin $0\in\RR^n$, and there are precisely $n$ facets of $P_D$
containing $0$, namely
$P_D\cap\{u=(u_1,\ldots,u_n)\in\RR^n\mid u_i=0\}$, for $1\leq i\leq n$.

For every $m\geq 1$, we have a basis of $H^0(X,\cO_X(mD))$
given by $\{\chi^u\mid u\in mP_D\cap M\}$. Furthermore, our choice of $D$
implies that we have a trivialization $\cO_X(D)\vert_{U_{\sigma}}\simeq\cO_{U_{\sigma}}$
such that if $u=(u_1,\ldots,u_n)\in mP\cap M$, then the section $\chi^u\in H^0(X,\cO_X(mD))$ restricts to
$\prod_{i=1}^nt_i^{u_i}\in \cO(U_{\sigma})$.
This proves the following

\begin{lemma}\label{lem_toric}
With the above notation,
$\cO_X(mD)$ separates $p^e$-Frobenius jets at $x$ if and only if
for every $u=(u_1,\ldots,u_n)\in\ZZ_{\geq 0}^n$ with $u_i\leq p^e-1$ for all $i$, we have
$u\in mP_D$.
\end{lemma}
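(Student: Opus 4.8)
The plan is to unwind the definition of separating $p^e$-Frobenius jets at $x$ using the explicit toric description of sections and of the local coordinates set up just above. Recall that $\frm_x$ is the ideal $(t_1,\ldots,t_n)\subseteq\cO(U_\sigma)$, so $\frm_x^{[p^e]}$ is locally generated by $t_1^{p^e},\ldots,t_n^{p^e}$, and therefore $\cO_{X,x}/\frm_x^{[p^e]}$ has $k$-basis given by the monomials $\prod_{i=1}^n t_i^{u_i}$ with $0\le u_i\le p^e-1$ for all $i$. Via the trivialization $\cO_X(D)|_{U_\sigma}\simeq\cO_{U_\sigma}$, the space $H^0(X,\cO_X(mD)\otimes\cO_X/\frm_x^{[p^e]})$ is identified with this finite-dimensional $k$-vector space, spanned by those same monomials.

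Next I would describe the restriction map in these bases. The space $H^0(X,\cO_X(mD))$ has $k$-basis $\{\chi^u : u\in mP_D\cap M\}$, and by construction the section $\chi^u$ restricts on $U_\sigma$ to $\prod_{i=1}^n t_i^{u_i}$ — in particular $\chi^u$ is nonzero in $\cO_{X,x}$ precisely when all $u_i\ge 0$ (which holds automatically since $mP_D\subseteq\RR_{\ge0}^n$), and its image in $\cO_{X,x}/\frm_x^{[p^e]}$ is the basis monomial $\prod t_i^{u_i}$ if $u_i\le p^e-1$ for all $i$, and is $0$ otherwise. Thus the restriction map sends the basis $\{\chi^u : u\in mP_D\cap M\}$ to the monomial basis of $\cO_{X,x}/\frm_x^{[p^e]}$, killing those $u$ with some $u_i\ge p^e$ and sending the rest bijectively to distinct basis monomials.

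It follows immediately that the restriction map is surjective if and only if every basis monomial of the target is hit, i.e.\ if and only if for every $u=(u_1,\ldots,u_n)\in\ZZ_{\ge0}^n$ with $u_i\le p^e-1$ for all $i$ there is some lattice point of $mP_D$ equal to $u$ — equivalently, $u\in mP_D\cap M$, and since such $u$ is automatically in $M=\ZZ^n$, this is just the condition $u\in mP_D$. This is exactly the statement of the lemma. There is no real obstacle here: the only thing to be slightly careful about is that the images of the basis sections $\chi^u$ are pairwise distinct basis vectors of the target (so surjectivity is equivalent to hitting each one), which is clear since $u\mapsto\prod t_i^{u_i}$ is injective on exponent vectors; and that every integer point with coordinates in $[0,p^e-1]$ lies in $M$, so that "$u\in mP_D$" and "$u\in mP_D\cap M$" coincide for such $u$.
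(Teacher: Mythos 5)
Your proof is correct and is essentially the same argument the paper has in mind: the paper simply states the basis $\{\chi^u : u\in mP_D\cap M\}$ and the trivialization sending $\chi^u$ to $\prod t_i^{u_i}$, and then says ``this proves the following.'' You have merely spelled out the linear-algebra step (the restriction map sends distinct basis sections to distinct basis monomials or to zero, so surjectivity is equivalent to hitting every monomial with all $u_i\le p^e-1$), which is exactly what the paper leaves implicit.
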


\begin{theorem}\label{toric}
With the above notation, the Frobenius-Seshadri constant of $L\simeq\cO_X(D)$ at $x$ is given by
$$\epsilon_F(L; x)=\max\{r\in\RR_{\geq 0}\mid r\cdot C_n\subseteq P_D\},$$
where $C_n$ is the cube $[0,1]^n\subseteq\RR^n$.
\end{theorem}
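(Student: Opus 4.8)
The plan is to translate the combinatorial criterion of Lemma~\ref{lem_toric} directly into the claimed formula, using Proposition~\ref{prop1} i) to express $\epsilon_F(L;x)$ as a supremum of ratios $(p^e-1)/m$ over pairs $(m,e)$ for which $\cO_X(mD)$ separates $p^e$-Frobenius jets at $x$. By Lemma~\ref{lem_toric}, that separation condition says exactly that the integral cube $\{u\in\ZZ_{\geq 0}^n\mid u_i\leq p^e-1\}$ is contained in $mP_D$, equivalently that the real cube $(p^e-1)\cdot C_n$ is contained in $mP_D$ (the cube being the convex hull of its integral points, and $mP_D$ being convex), equivalently that $\frac{p^e-1}{m}\cdot C_n\subseteq P_D$. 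So writing $r_{\max}:=\max\{r\in\RR_{\geq 0}\mid r\cdot C_n\subseteq P_D\}$ — this maximum exists because the set of such $r$ is a closed interval $[0,r_{\max}]$, nonempty since $0\in P_D$ and bounded since $P_D$ is bounded — the task reduces to showing that the set of rationals of the form $\frac{p^e-1}{m}$ that are $\leq r_{\max}$ has supremum $r_{\max}$.

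First I would establish the inequality $\epsilon_F(L;x)\leq r_{\max}$: for any $(m,e)$ achieving separation we have $\frac{p^e-1}{m}\cdot C_n\subseteq P_D$, hence $\frac{p^e-1}{m}\leq r_{\max}$ by definition of $r_{\max}$; taking the supremum and invoking Proposition~\ref{prop1} i) gives the bound. For the reverse inequality $\epsilon_F(L;x)\geq r_{\max}$, fix $\delta>0$; I want a pair $(m,e)$ with $\frac{p^e-1}{m}>r_{\max}-\delta$ and $\frac{p^e-1}{m}\cdot C_n\subseteq P_D$. Choose a rational number $q=a/b$ with $r_{\max}-\delta<q<r_{\max}$ (if $r_{\max}>0$; the case $r_{\max}=0$ is handled separately but then $P_D$ has empty interior near $0$, contradicting ampleness, so in fact $r_{\max}>0$ always). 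Then $q\cdot C_n\subseteq r_{\max}\cdot C_n\subseteq P_D$. It remains to realize $q$ (or something at least as large but still $<r_{\max}$, hence still $>r_{\max}-\delta$ after shrinking $\delta$) as a ratio $\frac{p^e-1}{m}$ with integers $m,e\geq 1$. Here I would use that $p^e-1$ runs over an infinite set and that $\frac{p^e-1}{m}$ can be made to lie in any nonempty open subinterval of $(0,r_{\max})$: given $e$ large, set $m=\big\lceil \frac{p^e-1}{r_{\max}-\delta/2}\big\rceil$; then $\frac{p^e-1}{m}\leq r_{\max}-\delta/2<r_{\max}$, so $\frac{p^e-1}{m}\cdot C_n\subseteq P_D$, while $\frac{p^e-1}{m}\geq \frac{p^e-1}{(p^e-1)/(r_{\max}-\delta/2)+1}=\frac{r_{\max}-\delta/2}{1+(r_{\max}-\delta/2)/(p^e-1)}\to r_{\max}-\delta/2$ as $e\to\infty$, so for $e\gg 0$ this ratio exceeds $r_{\max}-\delta$. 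Lemma~\ref{lem_toric} then gives that $\cO_X(mD)$ separates $p^e$-Frobenius jets at $x$, so $\epsilon_F(L;x)\geq\frac{p^e-1}{m}>r_{\max}-\delta$. Letting $\delta\to 0$ completes the proof.

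The only genuinely delicate point is the passage between the integral statement in Lemma~\ref{lem_toric} (containment of lattice points of the cube in $mP_D$) and the real statement (containment of the solid cube in $mP_D$); this is where I should be careful. The forward direction is immediate from convexity: $(p^e-1)C_n=\mathrm{conv}\big(\{u\in\ZZ^n: 0\leq u_i\leq p^e-1\}\big)$, so if all those lattice points lie in the convex set $mP_D$ then so does their convex hull. The converse is trivial. I do not expect any arithmetic subtlety beyond the elementary density argument above — the supremum over $\{p^e-1: e\geq 1\}$ scaled by arbitrary positive integers $m$ is dense enough near any positive real to approximate $r_{\max}$ from below, and staying strictly below $r_{\max}$ keeps us inside $P_D$. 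No vanishing theorems or Frobenius splitting machinery are needed here; the toric dictionary of Lemma~\ref{lem_toric} does all the work, and the rest is a routine limit.
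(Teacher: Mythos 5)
Your proposal is correct and follows essentially the same route as the paper: both arguments reduce to Lemma~\ref{lem_toric}, note that the upper bound $\epsilon_F(L;x)\leq r_{\max}$ is immediate, and then show the lower bound via the same density argument (for large $e$, there is an integer $m$ with $r_{\max}-\delta < \frac{p^e-1}{m}\leq r_{\max}$, the paper taking $m$ in $[\,\frac{p^e-1}{M},\frac{p^e-1}{M-\delta}\,)$ and you taking $m=\lceil\frac{p^e-1}{r_{\max}-\delta/2}\rceil$ --- a cosmetic difference). The only point you make explicit that the paper leaves implicit is the passage from containment of the lattice points of the cube to containment of the solid cube via convexity, which is a worthwhile clarification but not a different proof.
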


\begin{proof}
 Let $M:=
\max\{r\in\RR_{\geq 0}\mid r\cdot C_n\subseteq P\}$.
Lemma~\ref{lem_toric}
gives
$$s_F(L^m; x)=\max\{e\in\ZZ_{\geq 0}\mid (p^e-1)\cdot C_n\subseteq mP_D\}.$$
In particular, for every $m\geq 1$ we have
$$\frac{p^{s_F(L^m; x)}-1}{m}\leq M,$$
hence $\epsilon_F(L; x)\leq M$. Moreover, in order to show that we have equality,
it is enough to show that for every $\delta\in (0,M)$, we can find positive integers $m$ and $e$
such that
\begin{equation}\label{eq_toric}
M-\delta<\frac{p^e-1}{m}\leq M.
\end{equation}
Indeed, in this case $s_F(L^m; x)\geq e$ and therefore
$\epsilon_F(L; x)\geq\frac{p^e-1}{m}>M-\delta$.

It is thus enough to find $e\geq 1$ such that there is an integer $m$ with
$$
\frac{p^e-1}{M}\leq m<\frac{p^e-1}{M-\delta}.
$$
This is clearly possible if
$$\frac{p^e-1}{M-\delta}-\frac{p^e-1}{M}=\frac{\delta(p^e-1)}{M(M-\delta)}>1.$$
This holds for $e\gg 0$, hence we can find $e$ and $m$ such that (\ref{eq_toric})
holds. We thus have $\epsilon_F(L; x)=M$.
\end{proof}

\begin{remark}
It is interesting to compare the formula in Theorem~\ref{toric} with the formula for the
usual Seshadri constant of $L$ at $x$ (see \cite[Corollary~4.2.2]{B+}).
This says that if $Q_n$ denotes the simplex $\{u=
(u_1,\ldots,u_n)\in\RR_{\geq 0}^n\mid u_1+\ldots+u_n\leq 1\}$, then
\begin{equation}\label{eq2_toric}
\epsilon(L; x)=\max\{r\in\RR_{\geq 0}\mid r\cdot Q_n\subseteq P_D\}.
\end{equation}
Of course, one can also rewrite the right-hand side of (\ref{eq2_toric})
as
$$r_0:=\max\{r\in\RR_{\geq 0}\mid re_i^*\in P_D\,\text{for}\,1\leq i\leq n\}.$$
Note that $r_0$ is an integer since $P_D$ is a lattice polytope.

One can prove (\ref{eq2_toric}) arguing as above. Indeed, it follows from the discussion
preceding Lemma~\ref{lem_toric}
that
$L^m$ separates $\ell$-jets at $x$ if and only if for every $u=(u_1,\ldots,u_n)
\in\ZZ_{\geq 0}$ with $\sum_{i=1}^nu_i\leq \ell$, we have $u\in mP_D$.
Therefore
$$s(L^m;x)=\max\{s\in\ZZ_{\geq 0}\mid s\cdot Q_n\subseteq mP_D\}=mr_0$$
for every $m\geq 1$, hence $\epsilon(L; x)=r_0$.
\end{remark}

\begin{example}
Let $M=\ZZ^2$ and consider the convex hull $P$ of the following set of points in $\RR^2$
$$\{(0,0), (1,0), (2,1), (2,2), (1,2), (0,1)\}.$$
The corresponding toric variety $X$ is the blow-up of $\PP^2$ at the three torus-fixed points,
and the line bundle $L$ corresponding to $P$ is the anti-canonical line bundle $\omega_X^{-1}$.
If $x\in X$ is the torus-fixed point corresponding to $0\in P$, then we have
$$\epsilon_F(L; x)=\epsilon(L; x)=1.$$
\begin{center}
\begin{tikzpicture}[scale=1.4]
\fill[lightgray] (0,0)--(1,0)--(2,1)--(2,2)--(1,2)--(0,1)--cycle;
\begin{scope}[thick]
\draw(0,0)--(1,0);
\draw(1,0)--(2,1);
\draw(2,1)--(2,2);
\draw(2,2)--(1,2);
\draw(1,2)--(0,1);
\draw(0,1)--(0,0);
\end{scope}
\draw[fill=black] (0,0) circle (0.5ex);
\draw (-0.15, -0.35) node{(0,0)};
\draw(1,1) node{$P$};

\fill[lightgray] (4.1,0)--(5.1,0)--(6.1,1)--(6.1,2)--(5.1,2)--(4.1,1)--cycle;
\begin{scope}[thick]
\draw(4.1,0)--(5.1,0);
\draw(5.1,0)--(6.1,1);
\draw(6.1,1)--(6.1,2);
\draw(6.1,2)--(5.1,2);
\draw(5.1,2)--(4.1,1);
\draw(4.1,1)--(4.1,0);
\end{scope}
\draw[fill=black] (4.1,0) circle (0.5ex);
\draw (3.85, -0.35) node{(0,0)};
\begin{scope}[dashed, very thick]
\draw(4.1,0)--(5.1,0);
\draw(5.1,0)--(5.1,1);
\draw(5.1,1)--(4.1,1);
\draw(4.1,1)--(4.1,0);
\draw(4.6,0.5) node{$C_2$};
\end{scope}

\fill[lightgray] (8.2,0)--(9.2,0)--(10.2,1)--(10.2,2)--(9.2,2)--(8.2,1)--cycle;
\begin{scope}[thick]
\draw(8.2,0)--(9.2,0);
\draw(9.2,0)--(10.2,1);
\draw(10.2,1)--(10.2,2);
\draw(10.2,2)--(9.2,2);
\draw(9.2,2)--(8.2,1);
\draw(8.2,1)--(8.2,0);
\end{scope}
\draw[fill=black] (8.2,0) circle (0.5ex);
\draw (7.95, -0.35) node{(0,0)};

\begin{scope}[dashed, very thick]
\draw(8.2,0)--(9.2,0);
\draw(9.2,0)--(8.2,1);
\draw(8.2,1)--(8.2,0);
\draw(8.55,0.35) node{$Q_2$};
\end{scope}

\end{tikzpicture}
\end{center}
\end{example}

\begin{remark}
The recent interesting preprint \cite{Ito} gives estimates
for the Seshadri constant at a general point on a toric variety (in characteristic zero).
It would be interesting to investigate whether one can obtain similar estimates for the
Frobenius-Seshadri constant when working over a field of positive characteristic.
\end{remark}

\providecommand{\bysame}{\leavevmode \hbox \o3em
{\hrulefill}\thinspace}


\begin{thebibliography}{positivity}

\bibitem[B+]{B+}
 T.~Bauer, S.~Di Rocco, B.~Harbourne, M.~Kapustka, A.~Knutsen,  W.~Syzdek, and
 T.~Szemberg, A primer on Seshadri constants, in \emph{Interactions of classical and numerical algebraic geometry}, 33Ð70, Contemp. Math., 496, Amer. Math. Soc., Providence, RI, 2009.

\bibitem[BK]{BK}
M.~Brion and S.~Kumar,
\emph{Frobenius splitting methods in geometry and representation theory},
Progress in Mathematics, 231, Birkh\"{a}user Boston, Inc., Boston, MA, 2005.

\bibitem[Dem]{Demailly}
J.-P.~Demailly, Singular Hermitian metrics on positive line bundles, in
\emph{Complex algebraic varieties (Bayreuth, 1990)}, 87--104, Springer, Berlin, 1992.

\bibitem[DRo]{DiRocco}
S.~Di Rocco,  Generation of k-jets on toric varieties,
 Math. Z. \textbf{231} (1999), 169-188.

\bibitem[EKL]{EKL}
L.~Ein, O.~K\"{u}chle, and R.~Lazarsfeld, Local positivity of ample line bundles,
 J. Differential Geom. \textbf{42} (1995), 193--219.

\bibitem[EL]{EL}
L.~Ein and R.~Lazarsfeld, Seshadri constants on smooth surfaces,
in \emph{Journ\'{e}es de G\'{e}om\'{e}trie Alg\'{e}brique d'Orsay (Orsay, 1992)},
 Ast\'{e}risque No. 218 (1993), 177Ð186.

\bibitem[Fuj]{Fujita}
T.~Fujita, Vanishing theorems for semipositive line bundles, in
\emph{Algebraic geometry (Tokyo/Kyoto, 1982)}, 519--528, Lecture Notes in Math. 1016,
Springer, Berlin, 1983.


\bibitem[Ful]{Fulton}
W.~Fulton, \emph{Introduction to toric varieties}, Ann. of Math.
Stud. \textbf{131}, The William H. Rover Lectures in Geometry,
Princeton Univ. Press, Princeton, NJ, 1993.

\bibitem[Kun]{KunzCharacterizationsOfRegularLocalRings}
{ E.~Kunz}: \emph{Characterizations of regular local rings for
  characteristic {$p$}}, Amer. J. Math. \textbf{91} (1969), 772--784.

\bibitem[Ito]{Ito}
A.~Ito, Seshadri constants via toric degenerations,
preprint, 2011, arXiv:1202.6664.


\bibitem[Laz]{positivity}
R.~Lazarsfeld, \emph{Positivity in algebraic geometry} I, Ergebnisse der Mathematik und ihrer
Grenzgebiete, 3. Folge, Vol. 48, Springer-Verlag, Berlin, 2004.


\end{thebibliography}

\end{document}